\setlist{font=\normalfont}
\numberwithin{equation}{section}
\theoremstyle{plain}
\newtheorem{theorem}{Theorem}[section]
\newtheorem{lemma}[theorem]{Lemma}
\newtheorem{proposition}[theorem]{Proposition}
\newtheorem{corollary}[theorem]{Corollary}
\theoremstyle{definition}
\newcommand{\mat}{M}                      %an arbitrary matrix
\newcommand{\matD}{D}                     %the matrix consisting of all tuples of dom h
\newcommand{\row}[1]{\mat_{#1 \ast}}      %row #1 of the matrix \mat
\newcommand{\col}[1]{\mat_{\ast #1}}      %column #1 of the matrix \mat
\newcommand{\rowD}[1]{\matD_{#1 \ast}}    %row #1 of the matrix \matD
\newcommand{\colD}[1]{\matD_{\ast #1}}    %column #1 of the matrix \matD
\newcommand{\Ckarika}{C^{\circ}}          %C karika
\newcommand{\Cpotty}{C^{\bullet}}         %C potty
\newcommand{\fpotty}{f^{\bullet}}         %f potty
\newcommand{\str}[2]{(#1,#2)}             %structure (#1,#2)
\newcommand{\strA}[1]{\str{A}{#1}}        %structure (A,#1)
\newcommand{\extsub}[1]{\widetilde{#1}}   %extension of the map #1 to a subalgebra
\newcommand{\ext}[1]{\widehat{#1}}        %extension of the map #1 to the whole structure
\newcommand{\genalg}[1]{[#1]}             %subalgebra generated by #1
\newcommand{\genrcl}[1]{\langle #1 \rangle_\exists} %relational clone generated by #1
\newcommand{\genwrcl}[1]{\langle #1 \rangle_\nexists} %weak relational clone generated by #1
\newcommand{\dom}{{\operatorname{dom}}\,}
\DeclareMathOperator{\Sol}{Sol}
\DeclareMathOperator{\Pol}{Pol}
\DeclareMathOperator{\pPol}{pPol}
\DeclareMathOperator{\Inv}{Inv}
\DeclareMathOperator{\Str}{Str}
\DeclareMathOperator{\Clo}{Clo}
\DeclareMathOperator{\SPfin}{SP_{fin}}
\DeclareMathOperator{\HSP}{HSP}
\DeclareMathOperator{\h}{ht}
\newcommand{\OA}{\mathcal{O}_A}
\newcommand{\OAn}{\mathcal{O}_A^{(n)}}
\newcommand{\OAk}{\mathcal{O}_A^{(k)}}
\newcommand{\PA}{\mathcal{P}_A}
\newcommand{\PAk}{\mathcal{P}_A^{(k)}}
\newcommand{\Cn}{C^{(n)}}
\newcommand{\RA}{\mathcal{R}_A}
\newcommand{\Ank}{A^{n \times k}}
\newcommand{\pres}{\vartriangleright}
\newcommand{\sdc}{\textup{(SDC)}}
\newcommand{\rh}{\subseteq}
\newcommand{\av}{\mathbf{a}}
\newcommand{\dv}{\mathbf{d}}
\newcommand{\alg}{\mathbb{A}}
\newcommand{\Balg}{\mathbb{B}}
\newcommand{\Calg}{\mathbb{C}}
\newcommand{\N}{\mathbb{N}}
\newcommand{\Z}{\mathbb{Z}}
\newcommand{\polhom}{polymorphism\hyp{}homogeneous}
\newcommand{\MR}[1]{\href{http://www.ams.org/mathscinet-getitem?mr=#1}{MR#1}}
\author{Endre~T\'{o}th \and Tam\'{a}s~Waldhauser}
\title[Polymorphism-homogeneity and universal algebraic geometry]{Polymorphism-homogeneity and\\ universal algebraic geometry\thanks{This research was partially supported by the National Research, Development and Innovation Office of Hungary under grants  K115518 and K128042, by the ÚNKP-20-3-SZTE-571 New National Excellence Program of the Ministry for Innovation and Technology from the source of the National Research, Development and Innovation Fund, and by grants 
% CHANGED!
% TUDFO/47138-1/2019-ITM
NKFIH-1279-2/2020 and TKP2021-NVA-09
of the Ministry for Innovation and Technology, Hungary.}}
\affiliation{Bolyai Institute, University of Szeged, Hungary}
\keywords{Polymorphism\hyp{}homogeneity, algebraic set, universal algebraic geometry, solution set of a system of equations, quantifier elimination, injective algebra}
\begin{document}

\publicationdetails{23}{2022}{2}{2}{6904}

\maketitle

\begin{abstract}
We assign a relational structure to any finite algebra in a canonical way, using solution sets of equations, and we prove that this relational structure is polymorphism\hyp{}homogeneous if and only if the algebra itself is polymorphism\hyp{}homogeneous.
We show that polymorphism\hyp{}homogeneity is also equivalent to the property that algebraic sets (i.e., solution sets of systems of equations) are exactly those sets of tuples that are closed under the centralizer clone of the algebra.
Furthermore, we prove that the aforementioned properties hold if and only if the algebra is injective in the category of its finite subpowers.
We also consider two additional conditions: a stronger variant for polymorphism\hyp{}homogeneity and for injectivity, and
we describe explicitly the finite semilattices, lattices, Abelian groups and monounary algebras satisfying any one of these three conditions.
\end{abstract}

%dedication
\begin{center}
\emph{Dedicated to Maurice Pouzet on the occasion of his 75th birthday.}
\end{center}

\section{Introduction}\label{sect:introduction}

Various notions of homogeneity appear in several areas of mathematics, such as model theory, group theory, combinatorics, etc.
Roughly speaking, a structure $\mathcal{A}$ is said to be homogeneous if certain kinds of local morphisms (i.e., morphisms defined on ``small'' substructures of $\mathcal{A}$) extend to endomorphisms of $\mathcal{A}$. 
Specifying the kind of morphisms that are expected to be extendible, one can define many different versions of homogeneity. 
We consider a variant called polymorphism\hyp{}homogeneity introduced by C.~Pech and M.~Pech \cite{PechPech2015} that involves ``multivariable'' homomorphisms: we require extendibility of homomorphisms defined on finitely generated substructures of direct powers of $\mathcal{A}$ (see Section~\ref{subsect:polhom} for the precise definition).

We study polymorphism\hyp{}homogeneity of finite algebraic structures and of certain relational structures constructed from algebras.
Since homomorphisms depend on the term operations, not on the particular choice of basic operations, we work mainly with the clone $C=\Clo(\alg)$ of term operations of the algebraic structure $\alg = \strA{F}$ (i.e., $C$ is the clone generated by $F$; see Section~\ref{subsect:clones}).
An $n$-ary operation $f \colon A^n \to A$ can be regarded as an $(n+1)$-ary relation, called the graph of $f$, denoted by $\fpotty$ (see Section~\ref{subsect:solution sets}).
Probably the most natural way to convert $\alg$ into a relational structure is to consider the graphs of the operations of $\alg$, thus we define $\Cpotty = \{ \fpotty : f \in C\}$ to be the set of graphs of term operations of $\alg$.
We will prove that if the relational structure $\strA{\Cpotty}$ is \polhom, then the algebra $\alg$ is also \polhom, but the converse is not true in general.

To construct a relational structure that is equivalent to $\alg$ in terms of polymorphism\hyp{}homogeneity, observe that the relation $\fpotty$ is nothing else than the solution set of the equation $f(x_1,\dots,x_n)=x_{n+1}$.
We might consider more general equations where the right hand side is not necessarily a single variable: let $\Ckarika$ be the set of solution sets of equations of the form $f(x_1,\dots,x_n)=g(x_1,\dots,x_n)$, where $f,g \in C$.
It turns out that $\strA{\Ckarika}$ is the ``right'' choice for a relational counterpart of $\alg$: the algebra $\alg$ is \polhom{} if and only if the relational structure $\strA{\Ckarika}$ is \polhom.

The elements of $\Ckarika$ are solution sets of single equations, hence intersections of such sets are solution sets of systems of equations. The latter are also called algebraic sets, as they are analogues of algebraic varieties\footnote{Note that the word variety has a different meaning in universal algebra: a variety is an equationally definable class of algebras, or, equivalently, a class of algebras that is closed under homomorphic images, subalgebras and direct products.} investigated in algebraic geometry; the study of these sets can thus be regarded as universal algebraic geometry \cite{Plotkin2007}.
Motivated by the fact that a set of vectors over a field is the solution set of a system of (homogeneous) linear equations if and only if it is closed under affine linear combinations (all linear combinations), we investigated the possibility of characterizing algebraic sets by means of closure conditions in \cite{TothWaldhauser2017,TothWaldhauser2020}.
If algebraic sets over $\alg$ are exactly those sets of tuples that are closed under a suitably chosen set of operations, then we say that $\alg$ has property \sdc{} (see Section~\ref{subsect:solution sets} for an explanation).
We will see that this property is equivalent to polymorphism\hyp{}homogeneity of $\strA{\Ckarika}$ and of $\alg$.

The categorical notion of injectivity also asks for extensions of certain homomorphisms, so it is not surprising that a finite algebra $\alg$ is \polhom{} if and only if it is injective in a certain class of algebras, namely in the class of finite subpowers of $\alg$ (see Section~\ref{subsect:injective} for the definitions).
Perhaps it is more natural to consider injectivity in the variety $\HSP\alg$ generated by $\alg$, hence we will also investigate the relationship between this notion and polymorphism\hyp{}homogeneity.

Figure~\ref{fig:conditions} shows the six properties that we are concerned with in this paper.
In Section~\ref{sect:equivalences} we prove all the implications and equivalences indicated in the figure.
It turns out that for finite algebras four of the six conditions are equivalent, thus we have actually three different properties marked by the three boxes.
In Section~\ref{sect:examples} we determine finite semilattices, lattices, Abelian groups and monounary algebras possessing these three properties, and these examples will justify all of the ``non-implications'' in Figure~\ref{fig:conditions}.

\begin{figure}
\centering
\begin{tikzpicture}
[
cardinvisible/.style=
    {
    rectangle, 
    font=\small,
	% CHANGED!    
    % text width=3.7cm,
    text width=3.63cm,
    minimum height=1.2cm,
    %text height=0.3cm,
    %text depth=0.5cm,
    inner sep=9pt,
    },
card/.style=
    {
    rectangle, 
    draw=black,
    font=\small,
    % CHANGED!    
    % text width=3.7cm,
    text width=3.63cm,
    minimum height=1.2cm,
    %text height=0.3cm,
    %text depth=0.5cm,
    inner sep=9pt,
    },
doublearrow/.style=
    {
    line width=1pt, 
    black,
    double distance = 4pt,
    >={Implies[length=0pt 1]},
    },
strike through/.style={
    draw=gray,
    postaction=decorate,
    decoration={
      markings,
      mark=at position 0.5 with {
        \draw[-] (-6pt,-6pt) -- (6pt, 6pt);
        \draw[-] (-6pt,6pt) -- (6pt, -6pt);
      }
    }
  }
]
\matrix [column sep=1.2cm, row sep=1.2cm]
{
\node[card] (Cpotty) {$\strA{\Cpotty}$ is \polhom}; & \node[cardinvisible] (Ckarika) {$\strA{\Ckarika}$ is \polhom}; & \node[cardinvisible] (C) {$\alg$ is polymorphism-\\homogeneous}; \\
\node[card] (injHSP) {$\alg$ is injective in $\HSP(\alg)$}; & \node[cardinvisible] (injSPfin) {$\alg$ is injective in $\SPfin(\alg)$}; & \node[cardinvisible] (SDC) {$\alg$ has property \sdc}; \\
};
\draw[thin] ($(Ckarika.north west)+(0.01,0.0)$)  rectangle ($(SDC.south east)+(-0.6,-0.0)$);
\draw[<->,doublearrow] (Ckarika) -- (C);
\draw[<->,doublearrow] (injSPfin) -- (SDC);
\draw[<->,doublearrow] (injSPfin) -- (Ckarika);
\draw[<->,doublearrow] (C) -- (SDC);
\draw[->,doublearrow] ([yshift=10pt] Cpotty.east) -- ([yshift=10pt] Ckarika.west);
\draw[->,doublearrow,strike through] ([yshift=-10pt] Ckarika.west) -- ([yshift=-10pt] Cpotty.east);
\draw[->,doublearrow] ([yshift=10pt] injHSP.east) -- ([yshift=10pt] injSPfin.west);
\draw[<-,doublearrow,strike through] ([yshift=-10pt] injHSP.east) -- ([yshift=-10pt] injSPfin.west);
\draw[->,doublearrow,strike through] ([xshift=-10pt] Cpotty.south) -- ([xshift=-10pt] injHSP.north);
\draw[->,doublearrow,strike through] ([xshift=10pt] injHSP.north) -- ([xshift=10pt] Cpotty.south);
\end{tikzpicture}
\caption{Relationships between property \sdc{} and several variants of polymorphism\hyp{}homogeneity and injectivity.}
\label{fig:conditions}
\end{figure}

\section{Preliminaries}

\subsection{Clones and relational clones}\label{subsect:clones}
Let $\OAn$ denote the set of all  \emph{$n$-ary operations} on a set $A$ (i.e., maps $f \colon A^n \to A$), and let $\OA$ be the set of all operations of arbitrary finite arities on $A$.
In this paper we will always assume that the set $A$ on which we consider operations and relations is finite.
The \emph{composition} of $f\in\OAn$ by $g_1,\dots,g_n \in \OAk$ is the $k$-ary operation $f(g_1,\dots,g_n)$ defined by
\[
	f(g_1,\dots,g_n)(\av) = f(g_1(\av),\dots,g_n(\av)) \qquad (\av \in A^k).
\] 
A set $C \rh \OA$ of operations is a \emph{clone} if $C$ is closed under composition and contains the projections $(x_1,\dots,x_n) \mapsto x_i$ for $1 \leq i \leq n$.
We use the symbol $\Cn$ for the $n$-ary part of $C$, i.e., $\Cn = C \cap \OAn$.
The clone \emph{generated} by $F \rh \OA$ is the least clone $\Clo(F)$ containing $F$. 
By the definition of a term operation, $\Clo(F)$ is the clone of term operations of the algebra $\alg =\strA{F}$, hence we will also use the notation $\Clo(\alg)$ for this clone.

A \emph{$k$-ary partial operation} on $A$ is a map $h \colon \dom h \to A$, where the \emph{domain} of $h$ can be any set $\dom h \rh A^k$.
The set of all partial operations on $A$ is denoted by $\PA$, and the set of all $k$-ary partial operations on $A$ is denoted by $\PAk$.
A \emph{strong partial clone} is a set of partial operations that is closed under composition, contains the projections, and contains all restrictions of its members to arbitrary subsets of their domains.  
Note that if $C \rh \OA$ is a clone, then the least strong partial clone $\Str(C)$ containing $C$ consists of all restrictions of elements of $C$, i.e., $h \in \PA$ belongs to $\Str(C)$ if and only if $h$ can be extended to a total operation $\ext{h} \in C$.

An $n$-ary \emph{relation} on $A$ is a subset of $A^n$; the set of all relations (of arbitrary arities) on $A$ is denoted by $\RA$.
Given a set of relations $R \rh \RA$, a \emph{primitive positive formula} $\Phi(x_1,\dots,x_n)$ over $R$ is an existentially quantified conjunction:
\begin{equation}
\Phi(x_1,\dots,x_n)  = \exists y_1 \cdots \exists y_m \bigwith_{i=1}^{t}\rho_{i}\bigl(z_1^{(i)},			\dots,z_{r_i}^{(i)}\bigr) \label{eq:pp formula},
\end{equation}
where $\rho_i \in R$ is a relation of arity  $r_i$, and each $z_j^{(i)}$ is a variable from the set $\{x_1,\dots,x_n, y_1,\dots,y_m\}$ for $i=1,\dots,t, \ j=1,\dots,r_i$.
The relation $\rho = \{ (a_1, \dots, a_n) : \Phi(a_1,\dots,a_n) \text{ is true}\} \rh A^n$ is then said to be \emph{defined by the primitive positive formula} $\Phi$.
The set of all primitive positive definable relations over $R$ is denoted by $\genrcl{R}$, and such sets of relations are called \emph{relational clones}.
If we allow only quantifier-free primitive positive formulas, then we obtain the \emph{weak relational clone} $\genwrcl{R}$.
	
\subsection{Galois connections between operations and relations} \label{subsect:Galois}

If $\mat=(m_{ij}) \in \Ank$ is an $n \times k$ matrix over the set $A$, then we denote the $i$-th row and the $j$-th column of $\mat$ by $\row{i}$ and $\col{j}$, respectively:
\begin{align*}
\row{i} &= (m_{i1},\dots,m_{ik}) \qquad (i=1,\dots,n), \\
\col{j} &= (m_{1j},\dots,m_{nj}) \qquad (j=1,\dots,k).
\end{align*}
If $h \in \PAk$ is a partial operation of arity $k$ such that the rows of $\mat$ are in the domain of $h$, then we can apply $h$ to each row of $\mat$, and then we obtain the $n$-tuple $\big(h(\row{1}),\dots,h(\row{n})\big)$. We may also denote this tuple by $h(\col{1},\dots,\col{k})$, as it is nothing but the componentwise application of $h$ to the $k$ columns of $\mat$. Therefore, we have 
\[
\big(h(\row{1}),\dots,h(\row{n})\big) = h(\col{1},\dots,\col{k}).
\]
We will often use the above equality without further mention.

We say that a $k$-ary (partial) operation $h$ \emph{preserves} the relation $\rho \rh A^n$, denoted as $h \pres \rho$, if for every matrix $M \in \Ank$ such that each column of $\mat$ belongs to $\rho$ (and each row of $\mat$ is in the domain of $h$), we have $h(\col{1},\dots,\col{k}) \in  \rho$.
If $R$ is a set of relations, then we write $h \pres R$ to indicate that $h$ preserves all elements of $R$.
In other words, $h \pres R$ holds if and only if $h$ is a (partial) polymorphism of the relational structure $\mathcal{A}=\strA{R}$, i.e., $h$ is a homomorphism from (the substructure $\dom h$ of) $\mathcal{A}^k$ to $\mathcal{A}$.
The set of all (partial) operations preserving each relation of $R$ is denoted by $\Pol R$ ($\pPol R$), and the set of all relations preserved by each member of a set $F$ of (partial) operations is denoted by $\Inv F$:
\begin{align*}
 \Pol R &= \big\{ h \in \OA : h \pres \rho \text{ for every } \rho \in R \big\}; \\
\pPol R &= \big\{ h \in \PA : h \pres \rho \text{ for every } \rho \in R \big\}; \\
 \Inv F &= \big\{ \rho \in \RA : h \pres \rho \text{ for every } h \in F \big\}.
\end{align*}
Note that $\Pol R = \pPol R \cap \OA$.

The closed sets under the Galois connection $\Pol - \Inv$ ($\pPol - \Inv$) between (partial) operations and relations are exactly the (strong partial) clones and the (weak) relational clones; this makes these Galois connections fundamental tools in clone theory.

\begin{theorem}[\cite{BKKR1969,Geiger1968,Romov1981}]\label{thm:Galois}
For any set of operations $F \rh \OA$ and any set of relations $R \rh \RA$, we have $\Clo(F) = \Pol \Inv F$ and $\genrcl{R} = \Inv \Pol R$.
For any set of partial operations $F \rh \PA$ and any set of relations $R \rh \RA$, we have $\Str(F) = \pPol \Inv F$ and $\genwrcl{R} = \Inv \pPol R$.
\end{theorem}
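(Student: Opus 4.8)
The plan is to reproduce the classical $\Pol$--$\Inv$ (and $\pPol$--$\Inv$) Galois theorem of the cited authors, splitting each of the four claimed equalities into an easy and a hard inclusion; the only substantial ingredient is a ``universal relation'' construction, and it is there that finiteness of $A$ enters. First I would dispatch the easy inclusions $\Clo(F)\rh\Pol\Inv F$, $\genrcl R\rh\Inv\Pol R$, $\Str(F)\rh\pPol\Inv F$ and $\genwrcl R\rh\Inv\pPol R$, all of which are formal. For the first: $F\rh\Pol\Inv F$ by the definition of $\Inv$, and $\Pol S$ is a clone for every $S\rh\RA$ (it contains the projections and, via the identity $h(\col 1,\dots,\col k)=(h(\row 1),\dots,h(\row n))$, is closed under composition), hence contains the generated clone $\Clo(F)$; the partial version is identical, using additionally that restricting a relation-preserving partial operation only removes admissible matrices and so cannot destroy preservation. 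Dually $R\rh\Inv\Pol R$, and one checks that $\Inv T$ is closed under primitive positive definability for any $T\rh\OA$: intersecting relations, identifying or permuting coordinates, and existentially projecting all turn polymorphisms of the constituents into polymorphisms of the result (for projection one first picks a witness for each column, then applies the operation rowwise). Hence $\genrcl R\rh\genrcl{\Inv\Pol R}=\Inv\Pol R$, and likewise $\genwrcl R\rh\Inv\pPol R$ using only closure under \emph{quantifier-free} pp-definability.

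Next comes the core, the hard inclusion on the operation side, $\Pol\Inv F\rh\Clo(F)$. Fix $g\in\Pol\Inv F$ of arity $k$, put $C=\Clo(F)$, and -- crucially using that $A$ is finite -- enumerate $A^k=\{\av_1,\dots,\av_N\}$ with $N=|A|^k$. Let $\mat$ be the $N\times k$ matrix whose $i$-th row is $\av_i$, so that its $j$-th column is the tuple $\big(e_j(\av_1),\dots,e_j(\av_N)\big)$ coming from the $j$-th projection $e_j$. I would introduce the $N$-ary relation $\rho=\big\{\big(t(\av_1),\dots,t(\av_N)\big):t\in C\text{ of arity }k\big\}$: by construction $\rho$ is closed under the componentwise action of $C$, so every $f\in F$ preserves it, i.e.\ $\rho\in\Inv F$, and therefore $g\pres\rho$. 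Since all $k$ columns of $\mat$ lie in $\rho$, applying $g$ gives $\big(g(\av_1),\dots,g(\av_N)\big)=g(\col 1,\dots,\col k)\in\rho$, which by the definition of $\rho$ means that $g$ coincides on all of $A^k$ with some $k$-ary $t\in C$; thus $g=t\in\Clo(F)$. The partial analogue $\pPol\Inv F\rh\Str(F)$ is obtained verbatim with $\dom h=\{\av_1,\dots,\av_p\}$ and $\rho=\big\{\big(t(\av_1),\dots,t(\av_p)\big):t\in\Str(F)\text{ of arity }k,\ \{\av_1,\dots,\av_p\}\rh\dom t\big\}$; this $\rho$ is still in $\Inv F$ because composing partial operations only shrinks domains, and the conclusion ``$h$ agrees with such a $t$ on $\dom h$'' says exactly that $h$ is a restriction of $t$, hence lies in the strong partial clone $\Str(F)$.

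It then remains to prove the hard inclusion on the relation side. For $\Inv\Pol R\rh\genrcl R$, take $\rho\rh A^n$ with $\rho\in\Inv\Pol R$, list $\rho=\{\mathbf b_1,\dots,\mathbf b_m\}$, let $\mat$ be the $n\times m$ matrix with columns $\mathbf b_1,\dots,\mathbf b_m$ and rows $\row 1,\dots,\row n\in A^m$, and write down the pp-formula $\Phi(x_1,\dots,x_n)$ with one variable $z_{\mathbf t}$ for each $\mathbf t\in A^m$, the variable $z_{\row i}$ renamed to $x_i$ and the rest existentially quantified, whose conjuncts are all atoms $\sigma(z_{\mathbf s_1},\dots,z_{\mathbf s_r})$ with $\sigma\in R$ of arity $r$ such that $\big((\mathbf s_1)_j,\dots,(\mathbf s_r)_j\big)\in\sigma$ for every $j\le m$. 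The key point is that these conjuncts say \emph{exactly} that the assignment $\mathbf t\mapsto z_{\mathbf t}$ is a polymorphism of $\strA R$: a satisfying assignment is thus an $m$-ary $g\in\Pol R$ with $g(\row i)=x_i$, and applying it to the columns of $\mat$ (all in $\rho$) forces $(x_1,\dots,x_n)=g(\col 1,\dots,\col m)\in\rho$, so $\rho$ contains the relation defined by $\Phi$; conversely $\mathbf b_j$ satisfies $\Phi$ under the assignment $z_{\mathbf t}:=(\mathbf t)_j$ (the $j$-th projection), giving the reverse inclusion, so $\rho\in\genrcl R$. For $\genwrcl R=\Inv\pPol R$ I would use the same $\mat$ but keep only $x_1,\dots,x_n$, taking $\Phi$ to be the conjunction of all atoms $\sigma(x_{i_1},\dots,x_{i_r})$ such that the projection of every tuple of $\rho$ to coordinates $i_1,\dots,i_r$ lies in $\sigma$; now a satisfying assignment only defines a \emph{partial} operation on $\{\row 1,\dots,\row n\}$, but that still preserves $R$, hence $\rho$, which again forces $(x_1,\dots,x_n)\in\rho$. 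This completes all four equalities.

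Everything outside the universal relation $\rho$ (and the pp-formula with its $|A|^m$ variables) is bookkeeping; the one genuinely load-bearing step, and the only place the hypothesis is used, is that these objects are \emph{finite} -- precisely because $A$ is -- so that $\rho$ is an admissible relation and $\Phi$ a legitimate formula. (For infinite $A$ all four equalities fail and must be restated with local-closure operators; this is the real obstacle one is spared here.) A minor remaining nuisance is the treatment of trivial relations -- the empty relation, and the diagonal constraints forced when two rows coincide, $\row i=\row{i'}$ -- which is handled in the usual way by permitting equality atoms $x_i=x_j$ (equivalently, including the diagonal $\Delta_A$ among the relations), an adjustment that affects neither clones nor relational clones.
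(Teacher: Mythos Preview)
The paper does not prove this theorem at all: it is stated as a classical result, attributed to Bodnar\v{c}uk--Kalu\v{z}nin--Kotov--Romov, Geiger, and Romov, and used as a black box throughout. So there is no ``paper's own proof'' to compare against.

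Your argument is the standard proof of these Galois connections and is correct. The universal $N$-ary relation $\rho=\{(t(\av_1),\dots,t(\av_N)):t\in C^{(k)}\}$ for the inclusion $\Pol\Inv F\rh\Clo(F)$, and the ``graph of the generic polymorphism'' pp-formula for $\Inv\Pol R\rh\genrcl R$, are exactly the classical devices; your adaptations to the partial case are also the right ones. One small point worth making explicit: in the quantifier-free direction $\Inv\pPol R\rh\genwrcl R$, the well-definedness of the partial map $h(\row i)=a_i$ really does rest on having the equality atom $x_i=x_{i'}$ available whenever $\row i=\row{i'}$. You flag this at the end, but note that the paper's definition of a primitive positive formula lists only atoms $\rho_i(\cdots)$ with $\rho_i\in R$ and does not mention equality; the theorem is nevertheless true because diagonals lie in $\Inv T$ for every $T$ and are conventionally regarded as belonging to every (weak) relational clone. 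It would be worth saying this in one sentence rather than relegating it to a parenthetical.
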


\subsection{Universal algebraic geometry and centralizers}\label{subsect:solution sets}

Let $\alg$ be a finite algebra and let $C = \Clo(\alg)$
% CHANGED!
% .
be the clone of term operations of $\alg$.
If $f$ and $g$ are $n$-ary term operations of $\alg$, then $f(x_1,\dots,x_n)=g(x_1,\dots,x_n)$ is an \emph{equation} in $n$ variables over $\alg$, which we may simply write as a pair $(f,g)$.
The \emph{solution set} of $(f,g)$ is then the set $\Sol(f,g) = \{ (a_1,\dots,a_n) \in A^n : f(a_1,\dots,a_n)=g(a_1,\dots,a_n) \}$.
Of special interest are the equations of the form $f(x_1,\dots,x_n)=x_{n+1}$; the solution set of this equation is the $(n+1)$-ary relation $\fpotty = \{ (a_1,\dots,a_n,a_{n+1}) \in A^{n+1} : f(a_1,\dots,a_n)=a_{n+1}) \}$, which is called the \emph{graph} of $f$.
We use the symbols $\Cpotty$ and $\Ckarika$ for the set of graphs and for the set of all solution sets of equations over $C$:
\begin{align*}
\Cpotty  &= \big\{ \fpotty : f \in C \big\}; \\
\Ckarika &= \big\{ \Sol(f,g) : f,g \in \Cn,  n \in \N \big\}.
\end{align*}
Note that $\Cpotty \rh \Ckarika$, and it is easy to verify that $\genrcl{\Cpotty} = \genrcl{\Ckarika}$ (see Lemma~3.2 of \cite{TothWaldhauser2020}), but in general $\genwrcl{\Cpotty}$ and $\genwrcl{\Ckarika}$ may be different weak relational clones.

The members of $\genwrcl{\Ckarika}$ are intersections of solution sets of finitely many equations, i.e., $\genwrcl{\Ckarika}$ consists of solution sets of finite systems of equations over $\alg$.
Allowing infinite systems of equations, we obtain the so-called \emph{algebraic sets}, which are the main objects of study in universal algebraic geometry \cite{Plotkin2007}.
Since we deal only with finite algebras, every system of equations is equivalent to a finite system of equations, thus the elements of $\genwrcl{\Ckarika}$ are exactly the algebraic sets.

As mentioned in Section~\ref{sect:introduction}, basic results of linear algebra hint at the possibility that algebraic sets can sometimes be described by closure conditions.
It turns out that if there is a clone $D$ such that algebraic sets are exactly those sets of tuples that are closed under $D$, then $D$ must be the clone $C^{\ast} = \Pol \Cpotty$ (see Corollary~3.7 in \cite{TothWaldhauser2020}).
This clone is called the \emph{centralizer} of $C$, since it consists of those operations that commute with every member of $C$; in other words, a $k$-ary operation $h$ belongs to $C^{\ast}$ if and only if $h$ is a homomorphism from $\alg^k$ to $\alg$.
(Observe that since $\genrcl{\Cpotty} = \genrcl{\Ckarika}$, the centralizer can equivalently be defined as $C^{\ast} = \Pol \Ckarika$, by Theorem~\ref{thm:Galois}.)

If the algebraic sets (i.e., solution sets of systems of equations) of $\alg$ coincide with the $C^{\ast}$-closed sets of tuples, then we say that the algebra $\alg$ has property \sdc{}; this abbreviation stands for ``Solution sets are Definable by closure under the Centralizer''.
We proved in \cite{TothWaldhauser2017} that every two-element algebra has this property, and in \cite{TothWaldhauser2020} finite semilattices and lattices with property \sdc{} were characterized (see Sections~\ref{subsect:semilattices} and \ref{subsect:lattices}).
In general, property \sdc{} is easily seen to be equivalent to the condition $\genrcl{\Ckarika} = \genwrcl{\Ckarika}$, i.e., the algebra $\alg$ has property \sdc{} if and only if quantifiers can be eliminated from primitive positive formulas over $\Ckarika$. Let us state this fact explicitly for later reference together with a few more equivalent conditions  (see Theorem~3.6 of \cite{TothWaldhauser2020}).

\begin{theorem}[\cite{TothWaldhauser2020}] \label{thm:3.6}
	For every clone $C$ on a finite set $A$, the following five conditions are equivalent:
	\begin{enumerate}[label=\normalfont(\roman*)]
		\item $C$ has Property~$\sdc$;
		\item $\langle C^\circ\rangle _\nexists=\Inv{(C^{\ast})}$;
		\item $\langle C^\circ\rangle_\nexists=\langle C^\circ\rangle_\exists$;
		\item every primitive positive formula over $C^{\circ}$ is equivalent to a quantifier-free primitive positive formula over $C^\circ$;
		\item $\langle C^\circ\rangle _\nexists$ is a relational clone.
	\end{enumerate}
\end{theorem}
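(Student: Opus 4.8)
The plan is to run a short cycle of equivalences anchored at condition~$(iii)$, using as the only real tool the two Galois connections of Theorem~\ref{thm:Galois}; everything else is a translation between primitive positive definability over $\Ckarika$, closure under the centralizer $C^{\ast}$, and the definition of Property~$\sdc$. Concretely I would prove $(i)\Leftrightarrow(ii)$, $(ii)\Leftrightarrow(iii)$, $(iii)\Leftrightarrow(iv)$ and $(iii)\Leftrightarrow(v)$.

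First I would record two facts already available from the discussion preceding the theorem: the weak relational clone $\genwrcl{\Ckarika}$ is exactly the family of algebraic sets of $\alg$ (finiteness of $A$ is what lets an arbitrary system of equations be replaced by a finite one), and the centralizer satisfies $C^{\ast}=\Pol\Cpotty=\Pol\Ckarika$. Combining the latter with Theorem~\ref{thm:Galois} yields
\[
\genrcl{\Ckarika}=\Inv\Pol\Ckarika=\Inv(C^{\ast}),
\]
so the relational clone generated by $\Ckarika$ is precisely the family of $C^{\ast}$-closed relations. With this in hand, $(i)\Leftrightarrow(ii)$ is immediate: by definition $C$ has Property~$\sdc$ exactly when the algebraic sets coincide with the $C^{\ast}$-closed sets of tuples, i.e.\ when $\genwrcl{\Ckarika}=\Inv(C^{\ast})$, and this last equation is statement~$(ii)$. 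Substituting the displayed identity into $(ii)$ rewrites it as $\genwrcl{\Ckarika}=\genrcl{\Ckarika}$, which is statement~$(iii)$; hence $(ii)\Leftrightarrow(iii)$.

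For $(iii)\Leftrightarrow(iv)$, note that the relations definable by primitive positive formulas over $\Ckarika$ are exactly the members of $\genrcl{\Ckarika}$, and those definable by quantifier-free such formulas are exactly the members of $\genwrcl{\Ckarika}$; since every quantifier-free primitive positive formula is a primitive positive formula, $\genwrcl{\Ckarika}\subseteq\genrcl{\Ckarika}$ always holds, and $(iv)$ asserts precisely the reverse inclusion, hence the equality $(iii)$, while conversely $(iii)$ trivially implies $(iv)$. For $(iii)\Leftrightarrow(v)$, recall that $\genrcl{\Ckarika}$ is, by Theorem~\ref{thm:Galois}, a relational clone --- indeed the least relational clone containing $\Ckarika$ --- whereas $\genwrcl{\Ckarika}$ is always a weak relational clone containing $\Ckarika$. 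If $(v)$ holds then $\genwrcl{\Ckarika}$ is a relational clone containing $\Ckarika$, so minimality forces $\genrcl{\Ckarika}\subseteq\genwrcl{\Ckarika}$ and thus equality, which is $(iii)$; conversely $(iii)$ makes $\genwrcl{\Ckarika}$ equal to the relational clone $\genrcl{\Ckarika}$, so $(v)$ holds.

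There is no genuine obstacle here. The one point that deserves attention is the identity $C^{\ast}=\Pol\Cpotty=\Pol\Ckarika$, i.e.\ that $\Pol$ does not distinguish $\Cpotty$ from the larger set $\Ckarika$; this follows from $\genrcl{\Cpotty}=\genrcl{\Ckarika}$ together with the fact that $\Pol R$ depends only on the relational clone generated by $R$, since $\Pol\genrcl{R}=\Pol\Inv\Pol R=\Pol R$. Once this and the identification of $\genwrcl{\Ckarika}$ with the algebraic sets are granted, the rest is bookkeeping with the Galois connections $\Pol-\Inv$ and $\pPol-\Inv$ and with the definitions of $\genrcl{\Ckarika}$ and $\genwrcl{\Ckarika}$.
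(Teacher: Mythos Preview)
The paper does not actually prove Theorem~\ref{thm:3.6}; it is quoted from \cite{TothWaldhauser2020} and stated without proof, so there is nothing to compare your argument against here. That said, your argument is correct: it is precisely the kind of Galois-connection bookkeeping one expects, and every step is justified by material already recorded in the paper (the identification of $\genwrcl{\Ckarika}$ with the algebraic sets, the identity $C^{\ast}=\Pol\Ckarika$, and Theorem~\ref{thm:Galois}). Your handling of $(iii)\Leftrightarrow(v)$ via minimality of $\genrcl{\Ckarika}$ among relational clones containing $\Ckarika$ is the clean way to close the cycle.
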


\subsection{Polymorphism\hyp{}homogeneity}\label{subsect:polhom}

A first-order structure $\mathcal{A}$ (i.e., a set $A$ equipped with relations and/or operations) is said to be 
% CHANGED!
% \emph{$k$-\polhom}, 
\emph{$k$-polymor\-phism\hyp{}homogeneous}, 
if every homomorphism $h\colon \mathcal{B} \to \mathcal{A}$ defined on a finitely generated substructure $\mathcal{B} \leq \mathcal{A}^k$ extends to a homomorphism $\ext{h}\colon \mathcal{A}^k \to \mathcal{A}$.
(Considering only finite structures, the assumption that $\mathcal{B}$ is finitely generated can be omitted from the definition.)
The case $k=1$ gives the notion of \emph{homomorphism\hyp{}homogeneity} introduced by P.~J.~Cameron and J.~Ne\v{s}et\v{r}il \cite{CameronNesetril2006}.
If $\mathcal{A}$ is $k$-\polhom{} for every natural number $k$, then we say that $\mathcal{A}$ is \emph{\polhom} \cite{PechPech2015}.
These two notions are linked by the following result, which was proved for relational structures by C.~Pech and M.~Pech \cite{PechPech2015} and for algebraic structures by Z.~Farkasov\'{a} and D.~Ja\-ku\-b\'{\i}\-ko\-v\'{a}\hyp{}Stu\-de\-novs\-k\'{a} \cite{FarkasovaJakubikova2015}, but the same proof works for arbitrary first-order structures.

\begin{proposition}[\cite{FarkasovaJakubikova2015,PechPech2015}]\label{prop:direct powers}
A first-order structure $\mathcal{A}$ is \polhom{} if and only if $\mathcal{A}^k$ is homomorphism\hyp{}homogeneous for all natural numbers $k$.
\end{proposition}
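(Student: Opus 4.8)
The plan is to establish both directions by unwinding the definitions of $k$-polymorphism\hyp{}homogeneity and homomorphism\hyp{}homogeneity, and then to apply the bijection $(\mathcal{A}^k)^\ell \cong \mathcal{A}^{k\ell}$. First I would recall what each side says. On the one hand, $\mathcal{A}$ is polymorphism\hyp{}homogeneous means: for every $k\in\N$, every homomorphism from a finitely generated substructure $\mathcal{B}\leq\mathcal{A}^k$ to $\mathcal{A}$ extends to a homomorphism $\mathcal{A}^k\to\mathcal{A}$. On the other hand, $\mathcal{A}^k$ is homomorphism\hyp{}homogeneous means: every homomorphism from a finitely generated substructure $\mathcal{B}\leq\mathcal{A}^k$ to $\mathcal{A}^k$ extends to an endomorphism of $\mathcal{A}^k$. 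So the two statements differ in two respects --- the target is $\mathcal{A}$ versus $\mathcal{A}^k$, and one quantifies over all powers while the other fixes the power --- and the proof is essentially the observation that these two differences cancel each other out.

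For the direction ``$\mathcal{A}$ polymorphism\hyp{}homogeneous $\Rightarrow$ $\mathcal{A}^k$ homomorphism\hyp{}homogeneous for all $k$'': fix $k$ and let $h\colon\mathcal{B}\to\mathcal{A}^k$ be a homomorphism from a finitely generated $\mathcal{B}\leq\mathcal{A}^k$. Compose $h$ with the $k$ coordinate projections $\pi_i\colon\mathcal{A}^k\to\mathcal{A}$ to obtain homomorphisms $h_i=\pi_i\circ h\colon\mathcal{B}\to\mathcal{A}$. Since $\mathcal{B}$ is a finitely generated substructure of $\mathcal{A}^k$ and $\mathcal{A}$ is polymorphism\hyp{}homogeneous, each $h_i$ extends to a homomorphism $\ext{h_i}\colon\mathcal{A}^k\to\mathcal{A}$. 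Then $\ext{h}=(\ext{h_1},\dots,\ext{h_k})\colon\mathcal{A}^k\to\mathcal{A}^k$ is a homomorphism extending $h$. Conversely, for ``$\mathcal{A}^k$ homomorphism\hyp{}homogeneous for all $k$ $\Rightarrow$ $\mathcal{A}$ polymorphism\hyp{}homogeneous'': fix $k$ and let $h\colon\mathcal{B}\to\mathcal{A}$ with $\mathcal{B}\leq\mathcal{A}^k$ finitely generated. Regard $\mathcal{A}$ as the diagonal substructure of $\mathcal{A}^k$, so that $h$ becomes a homomorphism $\mathcal{B}\to\mathcal{A}^k$ (with all coordinates equal); since $\mathcal{A}^k$ is homomorphism\hyp{}homogeneous, $h$ extends to an endomorphism $g\colon\mathcal{A}^k\to\mathcal{A}^k$, and then $\pi_1\circ g\colon\mathcal{A}^k\to\mathcal{A}$ is the desired extension of $h$ (it agrees with $h$ on $\mathcal{B}$ because there the first coordinate of $g$ equals $h$). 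Note that for finite $\mathcal{A}$ the ``finitely generated'' hypotheses are automatic, so this also works without the finite-generation caveat when $\mathcal{A}$ is finite.

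The only mildly delicate point --- and I expect it to be the main thing to get right rather than a genuine obstacle --- is the use of Proposition~\ref{prop:direct powers} in the form ``for \emph{all} $k$'' on both sides: the argument above shows directly that $\mathcal{A}$ being $k$-polymorphism\hyp{}homogeneous for all $k$ is equivalent to $\mathcal{A}^k$ being homomorphism\hyp{}homogeneous for all $k$, which is exactly the stated equivalence, so no iteration over $(\mathcal{A}^k)^\ell\cong\mathcal{A}^{k\ell}$ is actually needed. One should be careful that the substructure of $\mathcal{A}^k$ generated by a finite set is again finitely generated and that projections and diagonal embeddings are genuine homomorphisms of the first-order structure in question (both for relational and for algebraic signatures, and for mixed ones), but these are routine. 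The argument is uniform in the type of structure, which is why, as the authors remark, the same proof covers relational, algebraic, and arbitrary first-order structures.
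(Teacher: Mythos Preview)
Your argument is correct and is exactly the standard proof: decompose a map into $\mathcal{A}^k$ into its coordinate maps and extend each separately for one direction, and use the diagonal embedding $\mathcal{A}\hookrightarrow\mathcal{A}^k$ followed by projection for the other. The paper itself does not supply a proof of this proposition; it merely cites \cite{PechPech2015} (relational case) and \cite{FarkasovaJakubikova2015} (algebraic case) and remarks that the same proof works for arbitrary first-order structures, so there is nothing to compare against beyond noting that your write-up is the proof those references contain.
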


In the next proposition we recall a useful result from \cite{PechPech2015} that relates polymorphism\hyp{}homogeneity and quantifier elimination for finite relational structures; we give a short proof utilizing the Galois connections between (partial) operations and relations.

\begin{proposition}[\cite{PechPech2015}]\label{prop:QE <==> pol-hom}
A finite relational structure has quantifier elimination for primitive positive formulas if and only if it is \polhom.
\end{proposition}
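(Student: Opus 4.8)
The plan is to reduce both sides of the equivalence to purely algebraic statements about the clone $\Pol R$ and the strong partial clone $\pPol R$ of the structure $\mathcal{A}=\strA{R}$, and then to pass between the operation side and the relation side via the Galois theory of Theorem~\ref{thm:Galois}.

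First I would translate the two hypotheses. Having quantifier elimination for primitive positive formulas means exactly that $\genrcl{R}=\genwrcl{R}$, since these are by definition the sets of relations definable by primitive positive, respectively quantifier-free primitive positive, formulas over $R$. For the other side, recall from Section~\ref{subsect:Galois} that a $k$-ary partial operation $h$ is a partial polymorphism of $\mathcal{A}$ precisely when it is a homomorphism from the substructure of $\mathcal{A}^{k}$ induced on $\dom h$ into $\mathcal{A}$, and that for a clone $C$ the strong partial clone $\Str(C)$ consists of all restrictions of members of $C$. Since $\mathcal{A}$ is finite, $\mathcal{A}^{k}$ is finite, every subset of $A^{k}$ induces a (finitely generated) substructure, and a homomorphism $\mathcal{A}^{k}\to\mathcal{A}$ is the same thing as a $k$-ary member of $\Pol R$. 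Hence $\mathcal{A}$ is \polhom{} if and only if every partial polymorphism of $\mathcal{A}$ is a restriction of a total polymorphism of the same arity, that is, $\pPol R\subseteq\Str(\Pol R)$; and since the reverse inclusion always holds ($\Str(\Pol R)\subseteq\pPol R$ because $\pPol R$ is a strong partial clone containing $\Pol R$), polymorphism\hyp{}homogeneity of $\mathcal{A}$ is equivalent to $\pPol R=\Str(\Pol R)$.

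It then remains to prove $\genrcl{R}=\genwrcl{R}\iff\pPol R=\Str(\Pol R)$, which is a short computation with the closure operators $\Pol$, $\pPol$, $\Inv$, $\Str$, using $\genrcl{R}=\Inv\Pol R$, $\genwrcl{R}=\Inv\pPol R$ and $\Str(F)=\pPol\Inv F$ from Theorem~\ref{thm:Galois}. Applying $\pPol$ to $\genrcl{R}=\genwrcl{R}$ yields $\Str(\Pol R)=\pPol\Inv\Pol R=\pPol\Inv\pPol R=\Str(\pPol R)=\pPol R$, the last equality because $\pPol R$ is already a strong partial clone. Conversely, applying $\Inv$ to $\pPol R=\Str(\Pol R)$ yields $\genwrcl{R}=\Inv\pPol R=\Inv\Str(\Pol R)=\Inv\Pol R=\genrcl{R}$, where $\Inv\Str(\Pol R)=\Inv\Pol R$ because a restriction of an operation that preserves a relation again preserves that relation (and $\Str(\Pol R)$ consists of such restrictions). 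The one step that needs genuine care is the translation in the previous paragraph — checking that, for a finite relational structure, the categorical/model-theoretic definition of polymorphism\hyp{}homogeneity is faithfully encoded by the identity $\pPol R=\Str(\Pol R)$, in particular that ``finitely generated substructure of $\mathcal{A}^{k}$'' may be read as ``arbitrary subset of $A^{k}$'' in the finite setting and that ranging over all arities $k$ suffices. Once this dictionary is in place, the remaining computation is routine and I expect no further obstacle.
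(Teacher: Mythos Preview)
Your proposal is correct and follows essentially the same route as the paper: both translate quantifier elimination to $\genrcl{R}=\genwrcl{R}$, translate polymorphism\hyp{}homogeneity to $\pPol R=\Str(\Pol R)$, and bridge the two via the Galois connections of Theorem~\ref{thm:Galois}. The paper packages the Galois computation as a single chain of $\iff$'s rather than treating the two directions separately, but the content is identical.
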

\begin{proof}
A finite relational structure $\mathcal{A}=\strA{R}$ has quantifier elimination for primitive positive formulas if and only if $\genwrcl{R}=\genrcl{R}$.
Using the Galois connections $\Pol - \Inv$ (clones and relational clones) and $\pPol - \Inv$ (strong partial clones and weak relational clones), we can reformulate this condition in several steps to reach polymorphism\hyp{}homogeneity:
\begin{align*}
      \genwrcl{R}=\genrcl{R}
&\iff \Inv\pPol R = \Inv\Pol R \\
&\iff \pPol\Inv\pPol R = \pPol\Inv\Pol R \\
&\iff \pPol R = \Str(\Pol R) \\
&\iff  \{ h \in \PA : h \pres R \} = \{ h \in \PA : h \text{ extends to } \ext{h} \in \OA \text{ such that } \ext{h} \pres R\} \\
&\iff \mathcal{A} \text{ is \polhom.}\qedhere
\end{align*}
\end{proof}

\subsection{Injectivity}\label{subsect:injective}

Let $\mathcal{K}$ be a class of algebras and $\alg \in \mathcal{K}$.
We say that $\alg$ is \emph{injective} in $\mathcal{K}$ if every homomorphism $h\colon \Balg \to \alg$ extends to a homomorphism $\ext{h} \colon \Calg \to \alg$ whenever $\Balg,\Calg \in \mathcal{K}$ and $\Balg \leq \Calg$.
Clearly, if $\alg$ is injective in $\mathcal{K}$, then $\alg$ is also injective in every subclass of $\mathcal{K}$ that contains $\alg$.
Injectivity is most often considered in the largest relevant class $\mathcal{K}$; for example, if $\alg$ is a group or a lattice, then $\mathcal{K}$ is usually chosen to be the class of all groups or lattices.
In this paper we shall consider smaller classes, namely the variety generated by $\alg$ and the set of finite subpowers of $\alg$. Recall that a class of algebras is a variety if and only if it is closed under direct products, subalgebras, and homomorphic images. The variety generated by $\alg$, denoted by $\HSP\alg$, consists of the homomorphic images of subalgebras of direct powers of $\alg$. Analogously, $\SPfin\alg$ denotes the class of subalgebras of finite direct powers of $\alg$, and we refer to algebras in this class as finite subpowers of $\alg$. 
Let us mention that in \cite{KovacsNewman1982} a group $\alg$ is called \emph{relatively injective} if it is injective in the variety $\HSP\alg$.

\section{Polymorphism\hyp{}homogeneity, algebraic sets and injectivity}\label{sect:equivalences}

First let us prove the equivalences shown on the right hand side of Figure~\ref{fig:conditions}.
The equivalence of property \sdc{} and polymorphism\hyp{}homogeneity of $\strA{\Ckarika}$ follows immediately from Proposition~\ref{prop:QE <==> pol-hom}.

\begin{proposition}\label{prop:SDC <==> pol-hom rel}
If $\alg$ is a finite algebra and $C=\Clo(\alg)$, then  $\alg$ has property \sdc{} if and only if $\strA{\Ckarika}$ is \polhom.
\end{proposition}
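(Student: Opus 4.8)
The plan is to apply Proposition~\ref{prop:QE <==> pol-hom} to the finite relational structure $\strA{\Ckarika}$, using $R = \Ckarika$. That proposition says $\strA{\Ckarika}$ is \polhom{} if and only if $\strA{\Ckarika}$ has quantifier elimination for primitive positive formulas, i.e., $\genwrcl{\Ckarika} = \genrcl{\Ckarika}$. So the whole task reduces to matching this last equality with property~\sdc{}.

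For that I would invoke Theorem~\ref{thm:3.6}, specifically the equivalence of condition~(i) (namely $C$ has property~\sdc{}) with condition~(iii) (namely $\genwrcl{\Ckarika} = \genrcl{\Ckarika}$, written there as $\langle C^\circ\rangle_\nexists = \langle C^\circ\rangle_\exists$). Thus the argument is essentially a two-line chain: $\alg$ has property~\sdc{} $\iff$ $\genwrcl{\Ckarika} = \genrcl{\Ckarika}$ (Theorem~\ref{thm:3.6}) $\iff$ $\strA{\Ckarika}$ is \polhom{} (Proposition~\ref{prop:QE <==> pol-hom}). I should take a moment to note that $\Ckarika$ is a set of relations on the finite set $A$, so $\strA{\Ckarika}$ is genuinely a finite relational structure and Proposition~\ref{prop:QE <==> pol-hom} applies.

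There is essentially no obstacle here: the content has already been packaged into the two cited results, and the proposition is just the observation that they compose. The only thing to be careful about is bookkeeping of notation — the excerpt uses $\Ckarika$ for $C^\circ$ and writes $\genwrcl{-}$, $\genrcl{-}$ for the weak and full relational clones, whereas Theorem~\ref{thm:3.6} as quoted uses $\langle C^\circ\rangle_\nexists$, $\langle C^\circ\rangle_\exists$; these are the same objects, so I would just remark on this identification in passing. If one wanted to be fully self-contained one could also spell out that property~\sdc{} was \emph{defined} via $\genrcl{\Ckarika} = \genwrcl{\Ckarika}$ in Section~\ref{subsect:solution sets}, making even the appeal to Theorem~\ref{thm:3.6} unnecessary; but citing Theorem~\ref{thm:3.6}(i)$\iff$(iii) is the cleanest route.

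So the proof I would write is short:
\begin{proof}
Since $A$ is finite and $\Ckarika \subseteq \RA$, the structure $\strA{\Ckarika}$ is a finite relational structure, so Proposition~\ref{prop:QE <==> pol-hom} applies: $\strA{\Ckarika}$ is \polhom{} if and only if $\genwrcl{\Ckarika} = \genrcl{\Ckarika}$. By Theorem~\ref{thm:3.6} (equivalence of (i) and (iii), recalling $\Ckarika = C^\circ$), the latter equality holds if and only if $C$ has property~\sdc{}, i.e., if and only if $\alg$ has property~\sdc{}.
\end{proof}
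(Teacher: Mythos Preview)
Your proof is correct and follows exactly the same approach as the paper: chain Theorem~\ref{thm:3.6} (property~\sdc{} $\iff$ quantifier elimination over $\Ckarika$) with Proposition~\ref{prop:QE <==> pol-hom} (quantifier elimination $\iff$ polymorphism\hyp{}homogeneity). The only difference is cosmetic: you make explicit the verification that $\strA{\Ckarika}$ is a finite relational structure so that Proposition~\ref{prop:QE <==> pol-hom} applies, which the paper leaves implicit.
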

\begin{proof}
By Theorem~\ref{thm:3.6}, property \sdc{} of $\alg$ is equivalent to quantifier elimination for primitive positive formulas for the relational structure $\strA{\Ckarika}$, and the latter is equivalent to polymorphism\hyp{}homogeneity of $\strA{\Ckarika}$ by Proposition~\ref{prop:QE <==> pol-hom}.
\end{proof}

In the next theorem we establish the connection between ``algebraic'' and ``relational'' polymorphism\hyp{}homogeneity.
We need two technical lemmas for the proof of this result.

\begin{lemma}\label{lem:dom subalg}
Suppose that $\alg$ is a finite algebra, $C=\Clo(\alg)$ and  $h \in \PAk$ is a $k$-ary partial operation on $A$. 
If $\dom h$ is a subalgebra of $\alg^k$, then the following three conditions are equivalent:
\begin{enumerate}[label=(\alph*)]
\item $h \pres \Cpotty$;
\item $h \pres \Ckarika$;
\item $h$ is a homomorphism from $\dom h$ to $\alg$.
\end{enumerate}
\end{lemma}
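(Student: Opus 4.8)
The plan is to prove the equivalence of (a), (b), (c) by establishing the cycle $\text{(a)}\Rightarrow\text{(c)}\Rightarrow\text{(b)}\Rightarrow\text{(a)}$, of which the last implication is trivial since $\Cpotty\subseteq\Ckarika$. The only substantive work is the implication $\text{(a)}\Rightarrow\text{(c)}$; the implication $\text{(c)}\Rightarrow\text{(b)}$ is also easy, as I explain below.

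\textbf{The implication (c)$\Rightarrow$(b).} Assume $h$ is a homomorphism from the subalgebra $\dom h\leq\alg^k$ to $\alg$. Let $\rho=\Sol(f,g)\in\Ckarika$ with $f,g\in\Cn$, and take a matrix $M\in A^{n\times k}$ whose $k$ columns all lie in $\rho$ and whose $n$ rows all lie in $\dom h$. I must show $\big(h(\row{1}),\dots,h(\row{n})\big)\in\rho$, i.e. $f\big(h(\row{1}),\dots,h(\row{n})\big)=g\big(h(\row{1}),\dots,h(\row{n})\big)$. Since $h$ is a homomorphism of algebras and $f,g$ are term operations, $h$ commutes with $f$ and with $g$ applied row-wise; concretely, using the identity $\big(h(\row{1}),\dots,h(\row{n})\big)=h(\col{1},\dots,\col{k})$, we get $f\big(h(\col{1},\dots,\col{k})\big)=h\big(f(\col{1}),\dots,f(\col{k})\big)$ where $f(\col{j})$ denotes $f$ applied to the $j$-th column (this is legitimate because each column, being in $\dom h$ coordinatewise-in-the-rows, lies in the subalgebra $\dom h$, hence so does the tuple $(f(\col 1),\dots,f(\col k))$ after we note columns are rows of $M^{T}$ — more carefully, $f$ applied to the rows of the transposed picture; I will phrase this in terms of the rows of $M$ lying in $\dom h$ and $h$ being a homomorphism $\alg^k\supseteq\dom h\to\alg$). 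The same identity holds for $g$, and since $\col j\in\rho$ means $f(\col j)=g(\col j)$ for each $j$, the two sides agree, so $h\pres\rho$.

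\textbf{The implication (a)$\Rightarrow$(c).} Assume $h\pres\Cpotty$ and that $\dom h$ is a subalgebra of $\alg^k$. I want to show $h$ is a homomorphism $\dom h\to\alg$, i.e. for every basic operation $f$ of $\alg$ (say $n$-ary) and all $\av_1,\dots,\av_n\in\dom h$, we have $h\big(f^{\alg^k}(\av_1,\dots,\av_n)\big)=f^{\alg}\big(h(\av_1),\dots,h(\av_n)\big)$. Form the matrix $M\in A^{n\times k}$ whose $i$-th row is $\av_i$; then the rows of $M$ are in $\dom h$. Append one more row: set $\av_{n+1}=f^{\alg^k}(\av_1,\dots,\av_n)$, which lies in $\dom h$ because $\dom h$ is a subalgebra. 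The extended $(n+1)\times k$ matrix $M'$ has the property that each of its $k$ columns lies in the graph $\fpotty$ (since $f$ is applied coordinatewise in $\alg^k$). By $h\pres\fpotty$ we get $\big(h(\row 1),\dots,h(\row n),h(\av_{n+1})\big)\in\fpotty$, which says exactly $f^{\alg}\big(h(\av_1),\dots,h(\av_n)\big)=h(\av_{n+1})=h\big(f^{\alg^k}(\av_1,\dots,\av_n)\big)$. Since this holds for every basic operation, $h$ is a homomorphism.

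\textbf{Main obstacle.} There is no deep obstacle here; the proof is a direct unwinding of definitions. The one point requiring care is the hypothesis that $\dom h$ be a subalgebra of $\alg^k$: it is used precisely to guarantee that the auxiliary tuples produced by applying term/basic operations stay inside $\dom h$, so that $h$ is defined on them — without this closure assumption the statement fails, which is why it appears in the hypotheses. I would also remark that, since graphs of basic operations generate the graphs of all term operations under the relational clone operations and $h\pres\Cpotty$ is equivalent to $h$ preserving the graphs of the basic operations, it suffices to check the homomorphism condition on basic operations, as done above.
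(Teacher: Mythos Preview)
Your proof is correct and uses essentially the same ideas as the paper: the ``append a row'' trick to extract the homomorphism identity from preservation of a graph $\fpotty$, relying on $\dom h$ being a subalgebra so that the appended row $f(\row{1},\dots,\row{n})$ stays in the domain. The only difference is organizational: the paper runs the cycle $\text{(a)}\Rightarrow\text{(b)}\Rightarrow\text{(c)}\Rightarrow\text{(a)}$, while you run $\text{(a)}\Rightarrow\text{(c)}\Rightarrow\text{(b)}\Rightarrow\text{(a)}$; the computations are the same, just factored differently (your $\text{(c)}\Rightarrow\text{(b)}$ step reproduces exactly the calculation the paper does in its $\text{(a)}\Rightarrow\text{(b)}$, but invoking the homomorphism property directly rather than re-deriving it from $h\pres\fpotty$). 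Your exposition of $\text{(c)}\Rightarrow\text{(b)}$ is a bit tangled in the parenthetical about columns versus rows---the clean statement is simply that $f(\row{1},\dots,\row{n})\in\dom h$ because $\dom h$ is a subalgebra, and then $h\bigl(f(\row{1},\dots,\row{n})\bigr)=f\bigl(h(\row{1}),\dots,h(\row{n})\bigr)$ because $h$ is a homomorphism---but the mathematics is sound.
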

\begin{proof}
To show that (a) implies (b), assume that $h \pres \Cpotty$, and let $\rho = \Sol(f,g)$, where $f,g \in \Cn$; we shall prove that $h \pres \rho$.
Let $\mat \in \Ank$ be a matrix such that each row of $\mat$ belongs to $\dom h$ and each column of $\mat$ belongs to $\rho$.
Then we have $f(\row{1},\dots,\row{n}) \in \dom h$, as $\dom h$ is a subalgebra of $\alg^k$ and $f \in C$.
Now let $\mat'\in A^{(n+1)\times k}$ be the matrix obtained by adding the row $f(\row{1},\dots,\row{n})$ to the bottom of $\mat$.
Since $f(\row{1},\dots,\row{n})=(f(\col{1}),\dots,f(\col{k}))$, every column of $\mat'$ belongs to $\fpotty$, hence applying $h$ to each row of $\mat'$, we obtain a tuple in $\fpotty$, because $h$ preserves $\fpotty$ by our assumption. 
This means that 
\begin{equation}\label{eq:hf in lem:dom subalg}
h(f(\col{1}),\dots,f(\col{k}))=f(h(\col{1},\dots,\col{k})).    
\end{equation}
Using a similar argument, replacing $f$ by $g$, we obtain
\begin{equation}\label{eq:hg in lem:dom subalg}
h(g(\col{1}),\dots,g(\col{k}))=g(h(\col{1},\dots,\col{k})).
\end{equation}
All columns of $M$ were assumed to be in the relation $\rho = \Sol(f,g)$; therefore,
\begin{equation}\label{eq:fg in lem:dom subalg}
f(\col{1})=g(\col{1}),\dots,f(\col{k})=g(\col{k}).
\end{equation}
Combining \eqref{eq:hf in lem:dom subalg}, \eqref{eq:hg in lem:dom subalg} and \eqref{eq:fg in lem:dom subalg}, we can conclude that $f(h(\col{1},\dots,\col{k}))=g(h(\col{1},\dots,\col{k}))$, hence $h(\col{1},\dots,\col{k}) \in \rho$, and this proves that $h \pres \rho$.

Next suppose that (b) holds; to prove that $h$ is an algebra homomorphism, consider an operation $f \in \Cn$ and tuples $\dv_1,\dots,\dv_n \in \dom h$.
Since $\dom h$ is a subalgebra, we have $f(\dv_1,\dots,\dv_n) \in \dom h$.
Let $\mat \in A^{(n+1) \times k}$ be the matrix whose rows are $\dv_1,\dots,\dv_n, f(\dv_1,\dots,\dv_n)$.
Then all columns of $\mat$ belong to $\fpotty$, hence $(h(\dv_1),\dots,h(\dv_n),h(f(\dv_1,\dots,\dv_n))) \in \fpotty$, since $h$ was assumed to preserve $\Ckarika$ (recall that $\Ckarika \supseteq \Cpotty$).
Thus we have $f(h(\dv_1),\dots,h(\dv_n))=h(f(\dv_1,\dots,\dv_n))$, proving that $h$ is indeed a homomorphism.

Finally, assume (c) and let us verify (a).
Let $f \in \Cn$, and let $\mat \in A^{(n+1) \times k}$ be an arbitrary  matrix whose rows and columns belong to $\dom h$ and to $\fpotty$, respectively; in particular, the last row of $\mat$ is $f(\row{1},\dots,\row{n})$.
We need to show that $h(\col{1},\dots,\col{k}) \in \fpotty$, which is equivalent to $f(h(\row{1}),\dots,h(\row{n}))=h(f(\row{1},\dots,\row{n}))$.
The latter equality is justified by the fact that $h$ is a homomorphism.
\end{proof}

\begin{lemma}\label{lem:extension}
Let $C$ be a clone on a finite set $A$, and let $h \in \PAk$ be a $k$-ary partial operation on $A$. 
If $h$ preserves $\Ckarika$, then $h$ can be extended to a partial operation $\extsub{h} \in \PAk$ such that  $\extsub{h} \pres \Ckarika$ and $\dom\extsub{h}=\genalg{\dom h}$ (the subalgebra of $\alg^k$ generated by $\dom h$).
\end{lemma}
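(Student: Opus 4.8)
The plan is to write down the only possible candidate for the extension and then verify its properties, the one substantial point being well-definedness. Recall that the subalgebra $\genalg{\dom h}$ of $\alg^k$ consists precisely of the tuples of the form $f(\dv_1,\dots,\dv_n)$ with $f\in\Cn$, $\dv_1,\dots,\dv_n\in\dom h$ and $n\in\N$. I would define
\[
\extsub{h}\big(f(\dv_1,\dots,\dv_n)\big) := f\big(h(\dv_1),\dots,h(\dv_n)\big)
\]
for all such $f$ and $\dv_1,\dots,\dv_n$; this forces $\dom\extsub{h}=\genalg{\dom h}$, and letting $f$ be a unary projection shows that $\extsub{h}$ extends $h$. (The degenerate case $\dom h=\emptyset$ is trivial.)

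The main obstacle, and the only step that uses the hypothesis $h\pres\Ckarika$, is to show that $\extsub{h}$ is well defined, i.e.\ single-valued. Suppose $f(\dv_1,\dots,\dv_n)=g(\mathbf{e}_1,\dots,\mathbf{e}_m)$ with all arguments in $\dom h$. Composing $f$ and $g$ with suitable projections, I may assume that they have a common arity and are applied to one and the same list of tuples, say $f(\dv_1,\dots,\dv_n)=g(\dv_1,\dots,\dv_n)$ with $f,g\in\Cn$ and $\dv_1,\dots,\dv_n\in\dom h$. Form the matrix $\mat\in\Ank$ with rows $\row{1}=\dv_1,\dots,\row{n}=\dv_n$. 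Every row of $\mat$ lies in $\dom h$, and from $f(\row{1},\dots,\row{n})=f(\dv_1,\dots,\dv_n)=g(\dv_1,\dots,\dv_n)=g(\row{1},\dots,\row{n})$ we get, componentwise, $f(\col{j})=g(\col{j})$ for every $j$, so each column of $\mat$ lies in $\rho:=\Sol(f,g)\in\Ckarika$. Since $h\pres\rho$, we obtain $h(\col{1},\dots,\col{k})\in\rho$; and because $h(\col{1},\dots,\col{k})=\big(h(\row{1}),\dots,h(\row{n})\big)=\big(h(\dv_1),\dots,h(\dv_n)\big)$, this says exactly that $f\big(h(\dv_1),\dots,h(\dv_n)\big)=g\big(h(\dv_1),\dots,h(\dv_n)\big)$. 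Hence the value assigned by $\extsub{h}$ does not depend on the chosen representation.

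It remains to check that $\extsub{h}\pres\Ckarika$. Here I would first observe that $\extsub{h}$ is a homomorphism from its domain $\genalg{\dom h}$ to $\alg$: given $f\in\Cn$ and $\mathbf{a}_1,\dots,\mathbf{a}_n\in\genalg{\dom h}$, write each $\mathbf{a}_i$ as a term in elements of $\dom h$; then, using the clone axioms and the definition of $\extsub{h}$, both $\extsub{h}\big(f(\mathbf{a}_1,\dots,\mathbf{a}_n)\big)$ and $f\big(\extsub{h}(\mathbf{a}_1),\dots,\extsub{h}(\mathbf{a}_n)\big)$ reduce to one and the same term applied to the $h$-images of those elements. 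Since $\dom\extsub{h}=\genalg{\dom h}$ is a subalgebra of $\alg^k$, Lemma~\ref{lem:dom subalg} (applied with $\extsub{h}$ in place of $h$) now yields $\extsub{h}\pres\Ckarika$, which completes the argument.
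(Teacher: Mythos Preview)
Your argument is correct. The definition of $\extsub{h}$ and the well-definedness check are exactly the paper's (the paper fixes once and for all the list $\dv_1,\dots,\dv_m$ as an enumeration of $\dom h$, while you allow any finite list from $\dom h$ and then normalize via projections, but this is only cosmetic). The one genuine difference is in the final step: the paper verifies $\extsub{h}\pres\Ckarika$ by a direct computation---writing each row of a test matrix as $t_i(\dv_1,\dots,\dv_m)$, forming the composed terms $f'=f(t_1,\dots,t_n)$ and $g'=g(t_1,\dots,t_n)$, and using $h\pres\Sol(f',g')$---whereas you first observe that $\extsub{h}$ is an algebra homomorphism on the subalgebra $\genalg{\dom h}$ and then invoke Lemma~\ref{lem:dom subalg} to get $\extsub{h}\pres\Ckarika$. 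Your route is a bit shorter and reuses machinery already in place (and indeed the paper immediately applies Lemma~\ref{lem:dom subalg} to $\extsub{h}$ anyway in the proof of Theorem~\ref{thm:pol-hom Ckarika <==> pol-hom alg}); the paper's route has the minor virtue of making Lemma~\ref{lem:extension} self-contained, independent of Lemma~\ref{lem:dom subalg}.
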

\begin{proof}
If $\av \in \genalg{\dom h}$, then $\av$ can be obtained from the elements of $\dom h$ by an operation $t\in C$. Adding inessential variables to $t$ if necessary, we can assume that actually all elements of $\dom h$ are used, and thus the arity of $t$ is $m:=|\dom h|$.
Therefore, we can write $\av = t(\dv_1,\dots,\dv_m)$, where $\dom h = \{ \dv_1,\dots,\dv_m \}$ and $t \in C^{(m)}$. 
We then define the desired extension of $h$ at $\av$ by
\begin{equation}\label{eq:extension}
\extsub{h}(\av)=t(h(\dv_1),\dots,h(\dv_m)).    
\end{equation}

First we need to verify that $\extsub{h}$ is well defined.
Suppose that an element $\av \in \genalg{\dom h}$ can be written in more than one way in the above form: $\av=t_1(\dv_1,\dots,\dv_m)=t_2(\dv_1,\dots,\dv_m)$ with $t_1,t_2 \in C^{(m)}$.
Setting $\rho = \Sol(t_1,t_2)  \in \Ckarika$, and letting $\matD \in A^{m \times k}$ be the matrix with rows $\dv_1,\dots,\dv_m$, every column of $\matD$ belongs to the relation $\rho$.
Since $h$ preserves $\rho$, we have $h(\colD{1},\dots,\colD{k}) \in \rho$, and therefore $t_1(h(\colD{1},\dots,\colD{k}))=t_2(h(\colD{1},\dots,\colD{k}))$ holds. This implies that $\extsub{h}$ is well defined, as the value of $\extsub{h}(\av)$ in \eqref{eq:extension} does not depend on the particular choice of the operation $t$:
\begin{align*}
t_1(h(\dv_1),\dots,h(\dv_m))&=
t_1(h(\rowD{1}),\dots,h(\rowD{m}))\\
&=t_1(h(\colD{1},\dots,\colD{k}))\\
&=t_2(h(\colD{1},\dots,\colD{k}))\\
&=t_2(h(\rowD{1}),\dots,h(\rowD{m}))\\
&=t_2(h(\dv_1),\dots,h(\dv_m)).
\end{align*}

Choosing the $i$-th projection $t(x_1,\dots,x_m)=x_i$ in \eqref{eq:extension}, we see that $\extsub{h}(\dv_i)=h(\dv_i)$ for all $i \in \{ 1,\ldots,m \}$, thus $\extsub{h}$ is an extension of $h$. It remains to prove that $\extsub{h}$ preserves $\Ckarika$. 

Let $f,g \in \Cn$, let $\rho = \Sol(f,g) \in \Ckarika$, and let $\mat \in \Ank$ be a matrix such that all rows of $\mat$ are in $\dom \extsub{h}$ and each column of $\mat$ belongs to the relation $\rho$.
Since $\row{i} \in \dom \extsub{h} = \genalg{\dom h}$, we can write $\row{i}=t_i(\dv_1,\dots,\dv_m)$ for suitable operations $t_i \in C^{(m)}$ for $i=1,\dots,n$.
Since $\col{j} \in \rho$, we have $f(\col{j})=g(\col{j})$ for $j=1,\dots,k$, or, equivalently, $f(\row{1},\dots,\row{n})=g(\row{1},\dots,\row{n})$.
Combining the latter two observations, we get that
\begin{align*}
 f(t_1,\dots,t_n)(\dv_1,\dots,\dv_m)
 &=f(t_1(\dv_1,\dots,\dv_m),\dots,t_n(\dv_1,\dots,\dv_m))\\
 &=f(\row{1},\dots,\row{n})\\
 &=g(\row{1},\dots,\row{n})\\
 &=g(t_1(\dv_1,\dots,\dv_m),\dots,t_n(\dv_1,\dots,\dv_m))\\
 &=g(t_1,\dots,t_n)(\dv_1,\dots,\dv_m).
\end{align*}
% CHANGED!
% Setting 
Using the notation
$f'=f(t_1,\dots,t_n)$ and $g'=g(t_1,\dots,t_n)$, we can summarize the calculation above as $f'(\dv_1,\dots,\dv_m)=g'(\dv_1,\dots,\dv_m)$, which means that the columns of the matrix $D$ belong to the relation $\rho':=\Sol(f',g')$.
The clone $C$ is closed under composition, thus $f',g' \in C$, hence $\rho' \in \Ckarika$.
We assumed that $h \pres \Ckarika$; therefore, we have $(h(\dv_1),\dots,h(\dv_m)) \in \rho'$, and this is equivalent to $f'(h(\dv_1),\dots,h(\dv_m))=g'(h(\dv_1),\dots,h(\dv_m))$.
Expanding this last equality using the definition of $f'$ and $g'$ together with \eqref{eq:extension}, we obtain $\extsub{h}(\col{1},\dots,\col{k})) \in \rho$, which completes the proof of $\extsub{h} \pres \Ckarika$:
\begin{align*}
 f(\extsub{h}(\col{1},\dots,\col{k}))
 &=f(\extsub{h}(\row{1}),\dots,\extsub{h}(\row{n}))\\
 &=f\big(\extsub{h}(t_1(\dv_1,\dots,\dv_m)),\dots,\extsub{h}(t_n(\dv_1,\dots,\dv_m))\big)\\
 &=f\big(t_1(h(\dv_1),\dots,h(\dv_m)),\dots,t_n(h(\dv_1),\dots,h(\dv_m))\big)\\
 &=f(t_1,\dots,t_n)(h(\dv_1),\dots,h(\dv_m))\\
 &=f'(h(\dv_1),\dots,h(\dv_m))\\
 &=g'(h(\dv_1),\dots,h(\dv_m))\\
 &=g(t_1,\dots,t_n)(h(\dv_1),\dots,h(\dv_m))\\
 &=g\big(t_1(h(\dv_1),\dots,h(\dv_m)),\dots,t_n(h(\dv_1),\dots,h(\dv_m))\big)\\
 &=g\big(\extsub{h}(t_1(\dv_1,\dots,\dv_m)),\dots,\extsub{h}(t_n(\dv_1,\dots,\dv_m))\big)\\
 &=g(\extsub{h}(\row{1}),\dots,\extsub{h}(\row{n}))\\
 &= g(\extsub{h}(\col{1},\dots,\col{k})).\qedhere
\end{align*}
\end{proof}

\begin{theorem}\label{thm:pol-hom Ckarika <==> pol-hom alg}
If $\alg$ is a finite algebra and $C=\Clo(\alg)$, then  $\alg$ is \polhom{} if and only if $\strA{\Ckarika}$ is \polhom.
\end{theorem}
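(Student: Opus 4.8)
The plan is to turn both occurrences of polymorphism\hyp{}homogeneity into statements about extending partial operations that preserve $\Ckarika$, and then to bridge the difference between these two statements using Lemmas~\ref{lem:dom subalg} and \ref{lem:extension}.

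First I would record a dictionary between the ``algebraic'' and the ``relational'' settings. Since $A$ is finite, every subalgebra of $\alg^k$ and every substructure of $\strA{\Ckarika}^k$ is finite, so the ``finitely generated'' clause in the definition of $k$-\polhom{} may be dropped in both cases. By the equivalence of conditions (b) and (c) of Lemma~\ref{lem:dom subalg}, a homomorphism from a subalgebra $\Balg \leq \alg^k$ into $\alg$ is exactly a partial operation $h \in \PAk$ whose domain is a subalgebra of $\alg^k$ and which satisfies $h \pres \Ckarika$, while a homomorphism $\alg^k \to \alg$ is exactly a total operation in $C^{\ast} = \Pol \Ckarika$. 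On the relational side, a homomorphism from a substructure of $\strA{\Ckarika}^k$ into $\strA{\Ckarika}$ is precisely a partial operation $h \in \PAk$ with $h \pres \Ckarika$ and arbitrary domain, and a homomorphism $\strA{\Ckarika}^k \to \strA{\Ckarika}$ is a total operation in $\Pol \Ckarika$. Hence $\alg$ is $k$-\polhom{} if and only if every $h \in \PAk$ with $\dom h$ a subalgebra of $\alg^k$ and $h \pres \Ckarika$ extends to a total operation in $\Pol \Ckarika$, whereas $\strA{\Ckarika}$ is $k$-\polhom{} if and only if the same conclusion holds for \emph{every} $h \in \PAk$ with $h \pres \Ckarika$.

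Given this dictionary, the implication ``$\strA{\Ckarika}$ is \polhom{} $\Longrightarrow$ $\alg$ is \polhom'' is immediate, since the condition characterizing $\strA{\Ckarika}$ being $k$-\polhom{} is formally stronger than the one characterizing $\alg$ being $k$-\polhom{} (it imposes the extension requirement on a larger class of partial operations $h$). For the converse, assume $\alg$ is \polhom{}, fix $k$, and let $h \in \PAk$ with $h \pres \Ckarika$ be arbitrary. Lemma~\ref{lem:extension} yields an extension $\extsub{h} \in \PAk$ of $h$ with $\extsub{h} \pres \Ckarika$ and $\dom \extsub{h} = \genalg{\dom h}$, a subalgebra of $\alg^k$. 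By Lemma~\ref{lem:dom subalg}, $\extsub{h}$ is a homomorphism from the subalgebra $\genalg{\dom h} \leq \alg^k$ into $\alg$, so polymorphism\hyp{}homogeneity of $\alg$ provides a homomorphism $\ext{h} \colon \alg^k \to \alg$ extending $\extsub{h}$, i.e., a total operation $\ext{h} \in \Pol \Ckarika$. As $\ext{h}$ extends $\extsub{h}$, which extends $h$, the operation $h$ extends to a total polymorphism of $\strA{\Ckarika}$; since $k$ was arbitrary, $\strA{\Ckarika}$ is \polhom.

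The only real obstacle is the mismatch exposed by the dictionary: a homomorphism defined on a substructure of $\strA{\Ckarika}^k$ lives on an arbitrary subset of $A^k$, whereas algebraic polymorphism\hyp{}homogeneity only concerns domains that are closed under the operations of $\alg$. Lemma~\ref{lem:extension} is precisely the device that removes this obstacle, by canonically enlarging a $\Ckarika$-preserving partial operation to the generated subalgebra without destroying preservation; after that, Lemma~\ref{lem:dom subalg} lets algebraic polymorphism\hyp{}homogeneity take over. I expect verifying the dictionary itself to be routine, so the two lemmas carry the real content.
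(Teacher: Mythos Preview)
Your proof is correct and follows essentially the same route as the paper: both directions rest on Lemma~\ref{lem:dom subalg} to translate between ``algebra homomorphism'' and ``partial operation preserving $\Ckarika$'', and Lemma~\ref{lem:extension} to enlarge the domain to a subalgebra in the forward direction. Your presentation front-loads Lemma~\ref{lem:dom subalg} into a ``dictionary'' so that the implication $\strA{\Ckarika}$ \polhom{} $\Rightarrow$ $\alg$ \polhom{} becomes a formal inclusion of hypotheses, whereas the paper invokes Lemma~\ref{lem:dom subalg} separately in each direction; this is a cosmetic difference only.
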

\begin{proof}
Assume first that $\alg$ is \polhom{}, and consider an arbitrary partial polymorphism $h$ of $\strA{\Ckarika}$, i.e., $h \in \PAk$ preserves $\Ckarika$.
By Lemma~\ref{lem:extension}, $h$ can be extended to a $\Ckarika$-preserving partial operation $\extsub{h}$ defined on the subalgebra $\genalg{\dom h} \leq \alg^k$. 
Applying Lemma~\ref{lem:dom subalg} to $\extsub{h}$, we see that $\extsub{h}$ is an algebra homomorphism from $\genalg{\dom h}$ to $\alg$.
Since $\alg$ is \polhom{}, $\extsub{h}$ extends to a homomorphism $\ext{\extsub{h}}\colon \alg^k \to \alg$.
Using Lemma~\ref{lem:dom subalg} again, we see that $\ext{\extsub{h}}$ preserves $\Ckarika$, hence it is a polymorphism of the relational structure $\strA{\Ckarika}$, and this proves that the latter is \polhom{}.

Now suppose that $\strA{\Ckarika}$ is \polhom, and let $h \in \PAk$ be a homomorphism from a subalgebra $\dom h \leq \alg^k$ to $\alg$.
Lemma~\ref{lem:dom subalg} shows that $h \pres \Ckarika$, i.e., $h$ is a partial polymorphism of $\strA{\Ckarika}$.
Since $\strA{\Ckarika}$ is \polhom{}, $h$ can be extended to a polymorphism $\ext{h}$ of $\strA{\Ckarika}$.
By Lemma~\ref{lem:dom subalg}, $\ext{h}\colon \alg^k \to \alg$ is a homomorphism, and this proves that $\alg$ is \polhom{}.
\end{proof}

To complete the proof of the equivalences in the box on the right hand side of Figure~\ref{fig:conditions}, we relate injectivity and polymorphism\hyp{}homogeneity.

\begin{proposition}\label{prop:pol-hom alg <==> injective}
If $\alg$ is a finite algebra, then $\alg$ is \polhom{} if and only if $\alg$ is injective in $\SPfin(\alg)$.
\end{proposition}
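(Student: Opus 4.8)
The plan is simply to unwind the two definitions and observe that they differ only by the bookkeeping needed to pass between an abstract member of $\SPfin(\alg)$ and a concrete subalgebra of a finite direct power $\alg^k$. The key (trivial) fact to use is that every $\Calg \in \SPfin(\alg)$ is, up to isomorphism, a subalgebra of some $\alg^k$ with $k \in \N$, and that, since all algebras in sight are finite, every subalgebra is finitely generated, so the ``finitely generated'' clause in the definition of $k$-\polhom{} is automatic and may be dropped.

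For the ``only if'' direction I would assume $\alg$ is \polhom{} and take $\Balg \leq \Calg$ with $\Balg,\Calg \in \SPfin(\alg)$ together with a homomorphism $h\colon \Balg \to \alg$. Identifying $\Calg$ with a subalgebra of $\alg^k$ for a suitable $k$, I may regard $\Balg$ as a subalgebra of $\alg^k$ as well (because $\Balg \leq \Calg \leq \alg^k$). Since $\alg$ is $k$-\polhom{} and $\Balg$ is a finite, hence finitely generated, subalgebra of $\alg^k$, the homomorphism $h$ extends to $\ext{h}\colon \alg^k \to \alg$. The restriction of $\ext{h}$ to $\Calg$ is then a homomorphism $\Calg \to \alg$ agreeing with $h$ on $\Balg$, which is exactly the extension required by injectivity; hence $\alg$ is injective in $\SPfin(\alg)$.

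For the ``if'' direction I would assume $\alg$ is injective in $\SPfin(\alg)$, fix $k \in \N$, and take a homomorphism $h\colon \Balg \to \alg$ from a (finite) subalgebra $\Balg \leq \alg^k$. Since $\Balg,\alg^k \in \SPfin(\alg)$ and $\Balg \leq \alg^k$, injectivity directly provides an extension of $h$ to a homomorphism $\alg^k \to \alg$. As $k$ was arbitrary, $\alg$ is $k$-\polhom{} for every $k$, i.e.\ \polhom. There is no genuine obstacle here: the whole argument is a matter of unravelling definitions, and the only points needing (minor) care are the identification of an abstract finite subpower with a concrete subalgebra of $\alg^k$ and the observation that restricting an already-obtained extension along $\Calg \leq \alg^k$ still yields a homomorphism extending $h$.
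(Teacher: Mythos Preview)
Your proof is correct and follows essentially the same approach as the paper: embed $\Calg$ (and hence $\Balg$) into some $\alg^k$, use polymorphism\hyp{}homogeneity to extend to $\alg^k$ and then restrict to $\Calg$ for one direction, and apply injectivity to the pair $\Balg \leq \alg^k$ directly for the other. The paper's argument is the same, only slightly terser about the identification of abstract subpowers with concrete subalgebras and about the finiteness making the ``finitely generated'' clause redundant.
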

\begin{proof}
Assume that $\alg$ is \polhom, and let $\Balg,\Calg \in  \SPfin(\alg)$ such that $\Balg \leq \Calg$.
Then we have $\Balg \leq \Calg \leq \alg^k$ for some $k \in \N$; in particular, $\Balg$ is a subalgebra of $\alg^k$.
Therefore, if $h \colon \Balg \to \alg$ is a homomorphism, then $h$ extends to a homomorphism $\ext{h} \colon \alg^k \to \alg$ by the polymorphism\hyp{}homogeneity of $\alg$.
A restriction of $\ext{h}$ then gives a homomorphism form $\Calg$ to $\alg$ that extends $h$, thereby proving the injectivity of $\alg$.

Conversely, if $\alg$ is injective in $\SPfin(\alg)$ and $h \in \PAk$ is a homomorphism from a subalgebra $\dom h \leq \alg^k$ to $\alg$, then the injectivity of $\alg$ immediately yields an extension $\ext{h} \colon \alg^k \to \alg$ of $h$, thus $\alg$ is indeed \polhom.
\end{proof}

\begin{corollary}\label{cor:equivalences}
If $\alg$ is a finite algebra and $C=\Clo(\alg)$, then the following conditions are equivalent:
\begin{enumerate}[label=(\roman*)]
\item $\alg$ has property \sdc;
\item $\alg$ is \polhom;
\item $\strA{\Ckarika}$ is \polhom;
\item $\alg$ is injective in $\SPfin(\alg)$.
\end{enumerate}
\end{corollary}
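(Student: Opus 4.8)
The plan is to obtain this corollary with no new argument, simply by concatenating the three biconditionals that have just been established so that they span all four conditions. Since each of Proposition~\ref{prop:SDC <==> pol-hom rel}, Theorem~\ref{thm:pol-hom Ckarika <==> pol-hom alg} and Proposition~\ref{prop:pol-hom alg <==> injective} is an ``if and only if'' statement, it suffices to check that the three of them, taken together, connect (i), (ii), (iii) and (iv) into a single equivalence class.

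Concretely, I would argue as follows. Condition~(i) is equivalent to condition~(iii): this is exactly Proposition~\ref{prop:SDC <==> pol-hom rel}. Condition~(iii) is equivalent to condition~(ii): this is Theorem~\ref{thm:pol-hom Ckarika <==> pol-hom alg}. Condition~(ii) is equivalent to condition~(iv): this is Proposition~\ref{prop:pol-hom alg <==> injective}. Chaining these three equivalences yields that all four conditions are pairwise equivalent, which is precisely the assertion of the corollary.

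I do not expect any obstacle here, because the substance has already been dealt with upstream. The genuinely nontrivial link is Theorem~\ref{thm:pol-hom Ckarika <==> pol-hom alg}, whose proof rests on the two technical lemmas (Lemma~\ref{lem:dom subalg}, on partial operations whose domain is a subalgebra, and Lemma~\ref{lem:extension}, on extending a $\Ckarika$-preserving partial operation to the generated subalgebra). The other two links are comparatively routine: (i)~$\Leftrightarrow$~(iii) follows by combining the Galois-connection reformulation of quantifier elimination in Proposition~\ref{prop:QE <==> pol-hom} with Theorem~\ref{thm:3.6}, and (ii)~$\Leftrightarrow$~(iv) is a direct unwinding of the definitions of injectivity in $\SPfin(\alg)$ and of polymorphism\hyp{}homogeneity. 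Thus the proof of the corollary itself amounts to bookkeeping, and the only thing to be careful about is that the cited equivalences genuinely span the list rather than, say, only relating (ii), (iii), (iv) among themselves.
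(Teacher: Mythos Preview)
Your proposal is correct and matches the paper's own proof exactly: the paper simply writes ``Combine propositions~\ref{prop:SDC <==> pol-hom rel} and \ref{prop:pol-hom alg <==> injective} and Theorem~\ref{thm:pol-hom Ckarika <==> pol-hom alg}.'' Your chaining (i)$\Leftrightarrow$(iii)$\Leftrightarrow$(ii)$\Leftrightarrow$(iv) via those three results is precisely this, with the added commentary being accurate but not needed for the formal argument.
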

\begin{proof}
Combine propositions~\ref{prop:SDC <==> pol-hom rel} and \ref{prop:pol-hom alg <==> injective} and Theorem~\ref{thm:pol-hom Ckarika <==> pol-hom alg}.
\end{proof}

It remains to verify the ``one-way'' implications in Figure~\ref{fig:conditions}. Since $\HSP(\alg) \supseteq \SPfin(\alg)$, it is trivial that if $\alg$ is injective in $\HSP(\alg)$, then it is also injective in $\SPfin(\alg)$.
We end this section by proving the remaining implication; in fact, we formulate it in a bit more explicit form, which will be useful in the next section.

\begin{proposition}\label{prop:pol-hom Cpotty ==> pol-hom Ckarika}
If $\alg$ is a finite algebra and $C=\Clo(\alg)$, then $\strA{\Cpotty}$ is \polhom{} if and only if $\strA{\Ckarika}$ is \polhom{} and $\genwrcl{\Cpotty} = \genwrcl{\Ckarika}$.
\end{proposition}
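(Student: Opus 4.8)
The plan is to convert both instances of polymorphism\hyp{}homogeneity into equalities of (weak) relational clones and then chase a very short chain of inclusions. By Proposition~\ref{prop:QE <==> pol-hom}, the relational structure $\strA{\Cpotty}$ is \polhom{} precisely when $\genwrcl{\Cpotty}=\genrcl{\Cpotty}$, and likewise $\strA{\Ckarika}$ is \polhom{} precisely when $\genwrcl{\Ckarika}=\genrcl{\Ckarika}$. Next I would invoke the equality $\genrcl{\Cpotty}=\genrcl{\Ckarika}$ already recorded in Section~\ref{subsect:solution sets} (Lemma~3.2 of \cite{TothWaldhauser2020}); write $\mathcal{R}$ for this common relational clone. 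Combining this with the trivial inclusions arising from $\Cpotty \rh \Ckarika$ and from the fact that every quantifier\hyp{}free primitive positive formula is a primitive positive formula, one obtains the chain
\[
\genwrcl{\Cpotty} \rh \genwrcl{\Ckarika} \rh \genrcl{\Ckarika} = \mathcal{R} = \genrcl{\Cpotty}.
\]

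With this chain in hand both directions are immediate. For the forward direction, assuming $\strA{\Cpotty}$ is \polhom{} means $\genwrcl{\Cpotty}=\mathcal{R}$; since $\genwrcl{\Cpotty}$ is the bottom and $\mathcal{R}$ the top of the chain, everything in between collapses, so $\genwrcl{\Ckarika}=\mathcal{R}=\genrcl{\Ckarika}$ (hence $\strA{\Ckarika}$ is \polhom{} by Proposition~\ref{prop:QE <==> pol-hom}) and also $\genwrcl{\Cpotty}=\genwrcl{\Ckarika}$. For the converse, $\strA{\Ckarika}$ being \polhom{} gives $\genwrcl{\Ckarika}=\genrcl{\Ckarika}=\mathcal{R}$, and the extra hypothesis $\genwrcl{\Cpotty}=\genwrcl{\Ckarika}$ then forces $\genwrcl{\Cpotty}=\mathcal{R}=\genrcl{\Cpotty}$, i.e.\ $\strA{\Cpotty}$ is \polhom{}.

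There is no deep obstacle here: the argument rests entirely on Proposition~\ref{prop:QE <==> pol-hom} (which turns \polhom-ity into quantifier elimination, hence into an equality of relational clones) together with the known collapse $\genrcl{\Cpotty}=\genrcl{\Ckarika}$ at the level of primitive positive definability. The point worth emphasising is that the \emph{existential} relational clones of $\Cpotty$ and $\Ckarika$ always coincide while their \emph{quantifier\hyp{}free} counterparts may genuinely differ; this is exactly why the additional hypothesis $\genwrcl{\Cpotty}=\genwrcl{\Ckarika}$ is indispensable in the converse direction and cannot be dropped. The only further routine check, if a fully self\hyp{}contained write\hyp{}up were desired, is that Proposition~\ref{prop:QE <==> pol-hom} applies verbatim to relational structures with infinitely many basic relations on the finite set $A$ — which it does, since its proof uses only the Galois connections $\Pol-\Inv$ and $\pPol-\Inv$, valid for arbitrary sets of relations.
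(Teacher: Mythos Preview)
Your argument is correct and essentially identical to the paper's own proof: both reduce the statement via Proposition~\ref{prop:QE <==> pol-hom} to the equivalence $\genwrcl{\Cpotty}=\genrcl{\Cpotty} \iff \bigl(\genwrcl{\Ckarika}=\genrcl{\Ckarika} \text{ and } \genwrcl{\Cpotty}=\genwrcl{\Ckarika}\bigr)$ and then read this off from the chain $\genwrcl{\Cpotty} \rh \genwrcl{\Ckarika} \rh \genrcl{\Ckarika} = \genrcl{\Cpotty}$. Your extra remarks about the role of the hypothesis $\genwrcl{\Cpotty}=\genwrcl{\Ckarika}$ and the applicability of Proposition~\ref{prop:QE <==> pol-hom} to infinite signatures are accurate and harmless elaborations.
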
 
\begin{proof}
According to Proposition~\ref{prop:QE <==> pol-hom}, we need to prove the following equivalence:
\begin{equation*}
\genwrcl{\Cpotty} = \genrcl{\Cpotty} \iff \genwrcl{\Ckarika} = \genrcl{\Ckarika} \text{ and } \genwrcl{\Cpotty} = \genwrcl{\Ckarika}.
\end{equation*}
This follows immediately from the following chain of containments (the last containment is Lemma~3.2 of \cite{TothWaldhauser2020}, the others are trivial):
\begin{equation*}
\genwrcl{\Cpotty} \rh \genwrcl{\Ckarika} \rh \genrcl{\Ckarika} =  \genrcl{\Cpotty}. \qedhere
\end{equation*}
\end{proof}

\section{Examples} \label{sect:examples}

We describe explicitly the finite algebras satisfying the properties considered in the previous section in certain well known varieties: semilattices, lattices, Abelian groups and monounary algebras.
These characterizations will provide counterexamples showing that the only valid implications among these properties are the ones shown in Figure~\ref{fig:conditions}.

\subsection{Semilattices} \label{subsect:semilattices}

If we consider finite semilattices, then it turns out that five of the six conditions of Figure~\ref{fig:conditions} are equivalent, and these semilattices have already been determined in the literature \cite{BrunsLakser1970,HornKimura1971,TothWaldhauser2020}.

\begin{theorem} \label{thm:semilattices}
If $\alg$ is a finite semilattice and $C=\Clo(\alg)$, then the following conditions are equivalent:
\begin{enumerate}[label=(\roman*)]
\item $\alg$ has property \sdc;
\item $\alg$ is \polhom;
\item $\strA{\Ckarika}$ is \polhom;
\item $\alg$ is injective in $\SPfin(\alg)$;
\item $\alg$ is injective in $\HSP(\alg)$;
\item $\alg$ is the semilattice reduct of a finite distributive lattice.
\end{enumerate}
\end{theorem}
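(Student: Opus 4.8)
The plan is to establish the cycle of implications (vi) $\Rightarrow$ (v) $\Rightarrow$ (iv) $\Rightarrow$ (i) $\Rightarrow$ (vi), using Corollary~\ref{cor:equivalences} for the middle portion. The implications (i) $\Leftrightarrow$ (ii) $\Leftrightarrow$ (iii) $\Leftrightarrow$ (iv) are already free from Corollary~\ref{cor:equivalences}, and (v) $\Rightarrow$ (iv) is the trivial restriction of injectivity from $\HSP(\alg)$ to $\SPfin(\alg)$ noted after Proposition~\ref{prop:pol-hom alg <==> injective}. So the real content is the two endpoint implications (vi) $\Rightarrow$ (v) and (i) $\Rightarrow$ (vi), and for these I would lean on the classical literature.

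For (vi) $\Rightarrow$ (v): if $\alg$ is the semilattice reduct of a finite distributive lattice, then $\alg$ is injective in the variety of semilattices — this is a theorem of Horn and Kimura \cite{HornKimura1971} (a finite semilattice is injective in all semilattices if and only if it is the reduct of a distributive lattice, equivalently a finite product of two-element semilattices). Since $\HSP(\alg)$ is a subvariety of the variety of all semilattices containing $\alg$, injectivity passes down to $\HSP(\alg)$, giving (v). (Alternatively one can invoke Bruns--Lakser \cite{BrunsLakser1970} on injective hulls of semilattices.) I would state this as a direct citation rather than reprove it.

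For (i) $\Rightarrow$ (vi) — equivalently, by Corollary~\ref{cor:equivalences}, polymorphism-homogeneity of a finite semilattice forces it to be a distributive-lattice reduct — I would cite the characterization of semilattices with property \sdc{} from \cite{TothWaldhauser2020}, which is exactly that they are the semilattice reducts of finite distributive lattices. If a self-contained argument is wanted, the key step is: a finite semilattice that is a direct power of the two-element semilattice is exactly a distributive-lattice reduct, so one must show that a finite \polhom{} semilattice embeds as a retract into such a power, or argue contrapositively that the three-element ``non-distributive'' configurations (e.g.\ a semilattice that is not a lattice, or whose associated lattice contains $N_5$ or $M_3$) admit a partial endomorphism of some finite power that does not extend. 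The main obstacle is precisely this contrapositive step: producing, for a finite semilattice $\alg$ that is \emph{not} a distributive-lattice reduct, an explicit finitely generated subalgebra $\Balg \leq \alg^k$ and a homomorphism $h\colon\Balg\to\alg$ that provably has no extension to $\alg^k$. This is where the bulk of the work in \cite{TothWaldhauser2020} lies, so in the present paper I would simply quote that result and note that items (iv)--(vi) then follow from Corollary~\ref{cor:equivalences} together with Horn--Kimura. This keeps the proof short: assemble the already-established equivalences (i)--(iv), add (v) $\Rightarrow$ (iv) trivially, close the loop with \cite{HornKimura1971} for (vi) $\Rightarrow$ (v) and \cite{TothWaldhauser2020} for (i) $\Rightarrow$ (vi).
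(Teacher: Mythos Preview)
Your proposal is correct and matches the paper's own proof almost exactly: the paper likewise takes (i)--(iv) from Corollary~\ref{cor:equivalences}, cites \cite{TothWaldhauser2020} for (i)$\iff$(vi), invokes Bruns--Lakser/Horn--Kimura for (vi)$\Rightarrow$(v), and notes (v)$\Rightarrow$(iv) is trivial. One small inaccuracy in your optional side remark (not in the proof itself): semilattice reducts of finite distributive lattices are \emph{not} the same as direct powers of the two-element semilattice---any finite chain is a counterexample---so that heuristic for a self-contained argument would need adjustment, but since you ultimately propose citing \cite{TothWaldhauser2020} this does not affect the proof.
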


\begin{proof}
We know that conditions (i)--(iv) are equivalent (see Corollary~\ref{cor:equivalences}), and it was proved in Theorem~5.5 of \cite{TothWaldhauser2020} that (i) is equivalent to (vi).
G.~Bruns and H.~Lakser \cite{BrunsLakser1970} and, independently, A.~Horn and N.~Kimura \cite{HornKimura1971} showed that the injective objects in the category of semilattices are the semilattice reducts of completely distributive lattices.
Therefore, if $\alg$ is the semilattice reduct of a finite distributive lattice, then $\alg$ is injective in the variety of all semilattices, thus $\alg$ is also injective in $\HSP(\alg)$.
This proves that (vi) implies (v), and taking into account that (v) obviously implies (iv), the proof is complete.
\end{proof}

The top left condition of Figure~\ref{fig:conditions}, i.e., polymorphism\hyp{}homogeneity of $\strA{\Cpotty}$, is not equivalent to the others; in fact, it is satisfied only by the trivial semilattices.
 
\begin{lemma}\label{lem:01 semilattice not polhom}
Let $\alg$ be a two-element semilattice and let $C=\Clo(\alg)$. Then the relational structure $\strA{\Cpotty}$ is not polymorphism\hyp{}homogeneous.
\end{lemma}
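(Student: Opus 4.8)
The plan is to apply Proposition~\ref{prop:QE <==> pol-hom}: since $\strA{\Cpotty}$ is a finite relational structure, it is \polhom{} if and only if it admits quantifier elimination for primitive positive formulas, i.e.\ if and only if $\genwrcl{\Cpotty}=\genrcl{\Cpotty}$. So it suffices to produce a relation lying in $\genrcl{\Cpotty}$ but not in $\genwrcl{\Cpotty}$. We may assume $A=\{0,1\}$ with $\wedge=\min$, and the relation I will use is the $4$-ary relation $\rho=\Sol(x_1\wedge x_2,\,x_3\wedge x_4)$. Because $\rho\in\Ckarika\rh\genrcl{\Ckarika}=\genrcl{\Cpotty}$, the membership $\rho\in\genrcl{\Cpotty}$ is immediate, and the real content is to show $\rho\notin\genwrcl{\Cpotty}$.

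For that I will use $\genwrcl{\Cpotty}=\Inv\pPol\Cpotty$ from Theorem~\ref{thm:Galois} and exhibit a partial operation $h\in\pPol\Cpotty$ with $h\not\pres\rho$. I first record a workable description of $\pPol\Cpotty$: the term operations of the two-element semilattice are exactly the maps $\bigwedge_{i\in S}x_i$ with $S\neq\emptyset$, and unwinding the definition of preservation of a graph relation — using that in the relevant matrix the last row is forced to be $f$ applied to the preceding rows — shows that a partial operation $h$ with domain $D\rh A^k$ preserves every member of $\Cpotty$ exactly when $h\bigl(\bigwedge T\bigr)=\bigwedge_{\dv\in T}h(\dv)$ holds for every nonempty $T\rh D$ whose componentwise meet $\bigwedge T$ (taken in $\alg^k$) again belongs to $D$.

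The decisive step is the choice of $h$. I will take $k=4$, let $D$ be the four-element antichain $\{(1,0,1,0),(0,1,0,1),(0,0,1,1),(1,1,0,0)\}\rh A^4$, and let $h$ send the first two of these tuples to $1$ and the last two to $0$. A short computation shows that the componentwise meet of any two of these tuples lies outside $D$, so the condition above holds vacuously and $h\in\pPol\Cpotty$. On the other hand, placing the four tuples of $D$ as the consecutive rows of a $4\times 4$ matrix $M$, one checks that each column of $M$ belongs to $\rho$, while applying $h$ row by row yields $(1,1,0,0)$, which is not in $\rho$ since $\min(1,1)\neq\min(0,0)$. Hence $h\not\pres\rho$, so $\rho\notin\Inv\pPol\Cpotty=\genwrcl{\Cpotty}$; combined with $\rho\in\genrcl{\Cpotty}$ and Proposition~\ref{prop:QE <==> pol-hom}, this shows that $\strA{\Cpotty}$ is not \polhom.

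I expect the main obstacle to be locating the witnessing pair $(h,\rho)$, since it must be arranged simultaneously that $\rho$ is a genuine two-sided equation (so that it sits in $\Ckarika$ yet escapes the graphs), that the domain $D$ of $h$ is ``meet-free'' in $\alg^4$ so that $h$ preserves $\Cpotty$ for free, and that the incidence pattern of the defining matrix still makes $h$ fail on $\rho$; once a suitable $4$-element antichain is found, the remaining verifications are routine. A slightly less self-contained alternative would be to observe, using Theorem~\ref{thm:semilattices}, that $\strA{\Ckarika}$ is already \polhom{} for the two-element semilattice (it being the semilattice reduct of the two-element chain) and then to apply Proposition~\ref{prop:pol-hom Cpotty ==> pol-hom Ckarika} to reduce the statement to $\genwrcl{\Cpotty}\neq\genwrcl{\Ckarika}$, for which the same $\rho$ and $h$ again serve as witnesses.
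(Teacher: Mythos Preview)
Your proof is correct, but it follows a genuinely different route from the paper's.

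The paper works with the \emph{ternary} relation $S=\Sol(x\wedge y\wedge z,\,x\wedge y)=\{0,1\}^3\setminus\{(1,1,0)\}$ and argues \emph{syntactically}: since $S$ is meet-irreducible in the power set of $\{0,1\}^3$, any quantifier-free primitive positive definition over $\Cpotty$ would reduce to a single graph equation $f(x_i,x_j,x_k)=x_\ell$, and a short exhaustive check (fifteen cases, up to idempotence and commutativity) shows none of these defines $S$. It then invokes Proposition~\ref{prop:pol-hom Cpotty ==> pol-hom Ckarika}. Your argument instead works with the \emph{quaternary} relation $\rho=\Sol(x_1\wedge x_2,\,x_3\wedge x_4)$ and argues \emph{semantically} via the Galois connection: you exhibit a concrete partial operation $h\in\pPol\Cpotty$ (with domain a four-element antichain in $\alg^4$, so that preservation of all graph relations holds vacuously) that fails to preserve $\rho$, whence $\rho\notin\Inv\pPol\Cpotty=\genwrcl{\Cpotty}$. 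Both approaches ultimately separate $\genwrcl{\Cpotty}$ from $\genrcl{\Cpotty}$, but yours avoids case analysis at the price of having to discover the antichain $D$ and the map $h$; the paper's approach is more pedestrian but entirely mechanical once the meet-irreducibility observation is made. Your characterisation of $\pPol\Cpotty$ in terms of meets of subsets of the domain is correct (idempotence of $\wedge$ handles repetitions among the rows), and all the verifications you sketch go through as stated.
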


\begin{proof}
We can assume without loss of generality that $\alg = \str{\{0,1\}}{\land}$ with the usual ordering $0<1$.
Let us consider the equation $x \land y \land z = x \land y$.
Obviously, the solution set $S=\{0,1\}^3 \setminus \{(1,1,0)\}$ of this equation is defined by a quantifier-free primitive positive formula over $C^\circ$. 
If $S$ can be defined by a quantifier-free primitive positive formula $\Phi$ over $C^\bullet$, then we can assume without loss of generality that $\Phi$ consists of a single equality, as $S$ misses only one element of $\{0,1\}^3$ (in other words, $S$ is meet-irreducible in the lattice of subsets of $\{0,1\}^3$).
Thus $\Phi$ is of the form $f(x_i,x_j,x_k)=x_\ell$ with $i,j,k,\ell \in \{ 1,2,3 \}$. Taking associativity, commutativity, and idempotence of the operation $\land$ into account, we have the following possibilites for $\Phi$ (excluding those trivial cases where the left hand side coincides with the right hand side):
\begin{equation*}
\arraycolsep=10pt
\renewcommand{\arraystretch}{1.5}
\begin{array}{ccccc}
x=y, & x = x \land y, & x = x \land z, & x = y \land z, & x = x \land y \land z, \\
y=z, & y = x \land y, & y = x \land z, & y = y \land z, & y = x \land y \land z, \\
z=x, & z = x \land y, & z = x \land z, & z = y \land z, & z = x \land y \land z.
\end{array}
\end{equation*}
It is easy to check that $S$ does not satisfy any of the equalities above; therefore, $S$ cannot be defined by a quantifier-free primitive positive formula over $C^\bullet$.
Thus $S$ belongs to $\genwrcl{\Ckarika}$ but not to $\genwrcl{\Cpotty}$,
hence $\strA{\Cpotty}$ is not \polhom{} by Proposition~\ref{prop:pol-hom Cpotty ==> pol-hom Ckarika}.
\end{proof}
 
\begin{theorem}
If $\alg$ is a nontrivial finite semilattice and $C=\Clo(\alg)$, then the relational structure $\strA{\Cpotty}$ is not polymorphism\hyp{}homogeneous.
\end{theorem}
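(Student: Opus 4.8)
The plan is to deduce the theorem from the two-element case already settled in Lemma~\ref{lem:01 semilattice not polhom}, by realising the two-element semilattice as a \emph{retract} of $\alg$ and proving that polymorphism\hyp{}homogeneity of $\strA{\Cpotty}$ is inherited by retracts. Throughout, for a subalgebra $\Balg\leq\alg$ with universe $B$, let $\Balg^{\bullet}$ denote the relational structure on $B$ whose relations are the graphs of the term operations of $\Balg$, built from $\Balg$ exactly as $\strA{\Cpotty}$ is built from $\alg$.

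First I would check that every nontrivial finite semilattice $\alg$ has a two-element subsemilattice onto which it retracts. Any two distinct elements $x,y\in A$ yield two distinct comparable elements: either $x\wedge y\in\{x,y\}$, or else $x\wedge y<x$. Fixing such a pair $a<b$, the set $B:=\{a,b\}$ is a subuniverse of $\alg$ and $\Balg:=\str{B}{\wedge}$ is isomorphic to the two-element semilattice. The principal filter ${\uparrow}b$ is closed under $\wedge$, and its complement is a down-set, hence also closed under $\wedge$; therefore the map $r\colon\alg\to\Balg$ sending ${\uparrow}b$ to $b$ and everything else to $a$ is a homomorphism fixing $a$ and $b$, because $r(x\wedge y)=b\iff x,y\in{\uparrow}b\iff r(x)\wedge r(y)=b$ (and both sides equal $a$ otherwise). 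Thus $r$ is a retraction of $\alg$ onto $\Balg$.

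The core of the argument is the following transfer lemma: \emph{if $\strA{\Cpotty}$ is polymorphism\hyp{}homogeneous and $r\colon\alg\to\Balg$ is a retraction onto a subalgebra $\Balg$, then $\Balg^{\bullet}$ is polymorphism\hyp{}homogeneous.} By the definition of polymorphism\hyp{}homogeneity for finite relational structures it suffices to extend an arbitrary partial polymorphism $h$ of $\Balg^{\bullet}$, say $h\colon D\to B$ with $D\subseteq B^{k}$, to a total polymorphism of $\Balg^{\bullet}$. First, $h$ is also a partial polymorphism of $\strA{\Cpotty}$: indeed, since $\dom h\subseteq B^{k}$, any matrix used to test whether $h\pres\fpotty$ for some $f\in\Clo(\alg)$ has all of its entries in the subuniverse $B$ (its rows lie in $\dom h\subseteq B^{k}$), and since the restriction of $f$ to $B$ is a term operation of $\Balg$ whose graph $h$ preserves, the test succeeds; hence $h\pres\Cpotty$. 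By hypothesis $h$ then extends to a total polymorphism $g\in\Pol\Cpotty=C^{\ast}$, that is, to a homomorphism $g\colon\alg^{k}\to\alg$. Now $g':=r\circ\bigl(g|_{B^{k}}\bigr)$ is a composite of homomorphisms $\Balg^{k}\to\alg^{k}\to\alg\to\Balg$, hence a homomorphism $\Balg^{k}\to\Balg$, and it extends $h$: for $u\in D$ we have $g'(u)=r\bigl(g(u)\bigr)=r\bigl(h(u)\bigr)=h(u)$ since $h(u)\in B$ and $r$ fixes $B$ pointwise. Finally, by Lemma~\ref{lem:dom subalg} applied to $\Balg$ (the domain $B^{k}$ of $g'$ is all of $\Balg^{k}$, in particular a subalgebra), $g'$ preserves the graphs of the term operations of $\Balg$, so $g'$ is a total polymorphism of $\Balg^{\bullet}$ that extends $h$, as required.

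Combining the two steps finishes the proof: if $\strA{\Cpotty}$ were polymorphism\hyp{}homogeneous for a nontrivial finite semilattice $\alg$, then applying the transfer lemma to the retraction produced in the first step would make $\Balg^{\bullet}$ polymorphism\hyp{}homogeneous for a two-element semilattice $\Balg$, contradicting Lemma~\ref{lem:01 semilattice not polhom}. I expect the only delicate point to be the transfer lemma, and within it the claim that a partial polymorphism of $\Balg^{\bullet}$ is also one of $\strA{\Cpotty}$: this does not follow from Lemma~\ref{lem:dom subalg} applied to $h$ itself, because $\dom h$ need not be a subalgebra of $\alg^{k}$, and must instead be argued straight from the matrix definition of preservation, using that $\dom h\subseteq B^{k}$ forces the entries of every relevant test matrix to lie in the subuniverse $B$.
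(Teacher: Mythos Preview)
Your argument is correct, but it takes a different route from the paper. The paper proceeds concretely: it fixes $a<b$, considers the solution set $S$ of the equation $x\wedge y\wedge z=x\wedge y$ (the same one as in Lemma~\ref{lem:01 semilattice not polhom}), observes that $S\cap\{a,b\}^3=\{a,b\}^3\setminus\{(b,b,a)\}$, and then notes that any quantifier-free primitive positive formula over $\Cpotty$ defining $S$ would, upon restriction to the subsemilattice $\{a,b\}$, have to contain a conjunct cutting out exactly $\{a,b\}^3\setminus\{(b,b,a)\}$; the finite case analysis of Lemma~\ref{lem:01 semilattice not polhom} then rules this out, so $S\in\genwrcl{\Ckarika}\setminus\genwrcl{\Cpotty}$ and Proposition~\ref{prop:pol-hom Cpotty ==> pol-hom Ckarika} applies. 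Your approach instead abstracts the reduction into a reusable transfer principle: polymorphism\hyp{}homogeneity of $\strA{\Cpotty}$ descends along retractions of $\alg$, so the two-element case settles everything. The paper's proof is shorter and entirely self-contained; your transfer lemma is a cleaner statement that would apply verbatim whenever one has a retraction onto an algebra already known to fail the property (though note it would not immediately cover the lattice theorem that follows, since not every finite lattice retracts onto the two-element lattice). Your caveat at the end is well placed: the step showing that a partial polymorphism of $\Balg^{\bullet}$ is one of $\strA{\Cpotty}$ really does need the direct matrix argument you give, together with the fact that $f|_{B}$ is a term operation of $\Balg$ for every $f\in C$.
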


\begin{proof}
Let $a,b \in A$ such that $a < b$, and let us consider the same equation as in the proof of Lemma~\ref{lem:01 semilattice not polhom}. Now for the solution set $S$ of this equation we have that $S \cap \{a,b\}^3 = \{a,b\}^3 \setminus \{(b,b,a)\}$. The same argument as in the proof of Lemma~\ref{lem:01 semilattice not polhom} shows that $S$ cannot be defined by a quantifier-free primitive positive formula over $\Cpotty$.
\end{proof}

\subsection{Lattices} \label{subsect:lattices}

For finite lattices the situation is very similar to the case of semilattices: five of the six conditions of Figure~\ref{fig:conditions} are equivalent, and the sixth one is satisfied only by trivial lattices.

\begin{theorem}
If $\alg$ is a finite lattice and $C=\Clo(\alg)$, then the following conditions are equivalent:
\begin{enumerate}[label=(\roman*)]
\item $\alg$ has property \sdc;
\item $\alg$ is \polhom;
\item $\strA{\Ckarika}$ is \polhom;
\item $\alg$ is injective in $\SPfin(\alg)$;
\item $\alg$ is injective in $\HSP(\alg)$;
\item $\alg$ is a finite Boolean lattice (i.e., a direct power of the two-element chain).
\end{enumerate}
\end{theorem}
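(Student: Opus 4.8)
My plan is to follow the same pattern as the semilattice case. Conditions (i)--(iv) are already equivalent by Corollary~\ref{cor:equivalences}, the equivalence of (i) and (vi) is the characterization of finite lattices with property \sdc{} established in \cite{TothWaldhauser2020}, and (v) trivially implies (iv) since $\HSP(\alg)\supseteq\SPfin(\alg)$. Hence the only implication that needs a new argument is (vi)$\Rightarrow$(v): every finite Boolean lattice is injective in the variety it generates.

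To prove this I would first reduce it to one classical fact. If $\alg$ is a finite Boolean lattice, then $\alg\cong\mathbf{2}^{n}$ for the two-element lattice $\mathbf{2}$, and since $\alg$ is distributive, $\HSP(\alg)$ is contained in the variety $\mathcal{D}$ of all distributive lattices. By the remark in Section~\ref{subsect:injective} that injectivity is inherited by subclasses containing the object, it is enough to show that $\alg$ is injective in $\mathcal{D}$; and because a finite product of injective objects is injective (project to each coordinate, extend there, and recombine), it suffices to prove that $\mathbf{2}$ itself is injective in $\mathcal{D}$.

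For the injectivity of $\mathbf{2}$ in $\mathcal{D}$, I would argue directly. Let $\Balg\leq\Calg$ be distributive lattices and let $h\colon\Balg\to\mathbf{2}$ be a homomorphism. If $h$ is constant, it extends to the corresponding constant homomorphism on $\Calg$. Otherwise $F:=h^{-1}(1)$ is a proper nonempty prime filter of $\Balg$ whose complement $I:=h^{-1}(0)$ is a prime ideal; the filter of $\Calg$ generated by $F$ and the ideal of $\Calg$ generated by $I$ are disjoint, since an element lying in both would satisfy $f\leq c\leq i$ with $f\in F$ and $i\in I$, forcing $f\in I$. By the prime ideal theorem for distributive lattices there is a prime filter $Q$ of $\Calg$ containing the first and disjoint from the second; then $Q\cap\Balg$ is a prime filter of $\Balg$ containing $F$ and avoiding $I$, hence equal to $F$, so the homomorphism $\Calg\to\mathbf{2}$ whose preimage of $1$ is $Q$ extends $h$.

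I do not expect a real obstacle: once the reduction to ``$\mathbf{2}$ is injective in $\mathcal{D}$'' is in place, this is just the Birkhoff--Stone prime separation theorem, and the remaining pieces are either cited from \cite{TothWaldhauser2020} or routine. The only points that require a little care are treating the degenerate (constant) homomorphisms separately and keeping track of filters versus ideals. Alternatively, one could bypass the direct computation altogether by invoking the classical description of the injectives in the category of distributive lattices and noting that a finite Boolean lattice is a complete Boolean algebra and therefore among them.
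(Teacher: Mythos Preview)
Your proposal is correct and follows the same skeleton as the paper: (i)--(iv) via Corollary~\ref{cor:equivalences}, (i)$\Leftrightarrow$(vi) from \cite{TothWaldhauser2020}, (v)$\Rightarrow$(iv) trivial, and then (vi)$\Rightarrow$(v) as the only remaining link. The only difference is in how that last implication is handled: the paper simply cites Balbes' theorem \cite{Balbes1967} that the injectives in the category of distributive lattices are exactly the complete Boolean lattices (which is precisely the alternative you mention at the end), whereas you give a direct prime-ideal-separation argument reducing to the injectivity of $\mathbf{2}$. Both are fine; your version is more self-contained, the paper's is shorter.
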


\begin{proof}
Just as in the proof of Theorem~\ref{thm:semilattices}, the equivalence of (i)--(iv) follows from Corollary~\ref{cor:equivalences}, and the equivalence of (i) and (vi) is Theorem~4.8 of \cite{TothWaldhauser2020}.
(Let us mention that I.~Dolinka and D.~Ma\v{s}ulovi\'{c} \cite{DolinkaMasulovic2011} proved that a finite lattice is homomorphism\hyp{}homogeneous if and only if it is a chain or a Boolean lattice.
This together with Proposition~\ref{prop:direct powers} can also be used to prove that (ii) and (vi) are equivalent.)
To complete the proof, it suffices to prove that (vi) implies (v).
This follows immediately from a result of R.~Balbes \cite{Balbes1967}: the injective objects in the category of distributive lattices are the complete Boolean lattices (observe that if $\alg$ is a nontrivial Boolean lattice, then $\HSP(\alg)$ is the variety of distributive lattices).
\end{proof}

\begin{lemma}\label{lem:01 lattice not polhom}
Let $\alg$ be a two-element lattice and let $C=\Clo(\alg)$. Then the relational structure $\strA{\Cpotty}$ is not polymorphism\hyp{}homogeneous.
\end{lemma}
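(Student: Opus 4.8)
The plan is to exhibit a single relation $\rho$ with $\rho\in\genwrcl{\Ckarika}$ but $\rho\notin\genwrcl{\Cpotty}$, and then conclude via Proposition~\ref{prop:pol-hom Cpotty ==> pol-hom Ckarika}. Indeed, that proposition says that $\strA{\Cpotty}$ is \polhom{} if and only if $\strA{\Ckarika}$ is \polhom{} and $\genwrcl{\Cpotty}=\genwrcl{\Ckarika}$; so once we know $\genwrcl{\Cpotty}\neq\genwrcl{\Ckarika}$, it follows that $\strA{\Cpotty}$ is not \polhom. Throughout I would take $\alg=\str{\{0,1\}}{\land,\lor}$ with $0<1$; the only property of $C=\Clo(\alg)$ that is needed is that every term operation of $\alg$ is order\hyp{}preserving, which holds because $\land$ and $\lor$ are monotone and a composition of monotone operations is monotone.

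For the witness I would use the $4$\hyp{}ary relation
\[
\rho=\{0,1\}^4\setminus\{(1,1,0,0)\}.
\]
To see $\rho\in\genwrcl{\Ckarika}$, note that in the two\hyp{}element lattice $x_1\land x_2\land(x_3\lor x_4)\le x_1\land x_2$ always holds, with equality failing exactly when $x_1\land x_2=1$ and $x_3\lor x_4=0$, i.e.\ at $(1,1,0,0)$. Hence $\rho=\Sol\bigl(x_1\land x_2,\ x_1\land x_2\land(x_3\lor x_4)\bigr)\in\Ckarika\subseteq\genwrcl{\Ckarika}$.

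The core of the proof is to show $\rho\notin\genwrcl{\Cpotty}$. Assume the contrary: then $\rho$ is defined by a conjunction of atomic formulas of the form $\fpotty(z_1,\dots,z_{m+1})$ with $f\in C$ and each $z_i$ among $x_1,\dots,x_4$. Each conjunct defines a relation that contains $\rho$; since $\rho$ is a coatom of the subset lattice of $\{0,1\}^4$, any such relation is either $\rho$ or $\{0,1\}^4$, and at least one of them must equal $\rho$. Taking that conjunct and setting $g(x_1,\dots,x_4)=f(x_{z_1},\dots,x_{z_m})$ (a term operation, hence monotone) and $\ell=z_{m+1}$, we obtain $\rho=\{x\in\{0,1\}^4:g(x)=x_\ell\}$, so $g$ agrees with the projection onto coordinate $\ell$ everywhere except at $b=(1,1,0,0)$, where $g(b)=1-b_\ell$. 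This is impossible for each choice of $\ell$: for $\ell\in\{1,2\}$ we would have $g(b)=0$, whereas $g$ equals $1$ on the tuple $(1,0,0,0)$, resp.\ $(0,1,0,0)$, which lies strictly below $b$; for $\ell\in\{3,4\}$ we would have $g(b)=1$, whereas $g$ equals $0$ on $(1,1,0,1)$, resp.\ $(1,1,1,0)$, which lies strictly above $b$. In every case the monotonicity of $g$ is contradicted, so $\rho\notin\genwrcl{\Cpotty}$, and Proposition~\ref{prop:pol-hom Cpotty ==> pol-hom Ckarika} finishes the proof.

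The step I expect to need the most care is the reduction just used: one has to argue that a coatom belonging to the weak relational clone $\genwrcl{\Cpotty}$ is already defined by a \emph{single} atomic formula over $\Cpotty$, and that identifying variables inside a graph $\fpotty$ still produces a constraint of the shape $g(x)=x_\ell$ with $g$ monotone. Once this is settled, the remaining four\hyp{}case check is routine. It is also worth explaining why the example must use four variables, in contrast with the three\hyp{}variable relation that works for semilattices: every solution set of an equation over $\alg$ contains $(0,\dots,0)$ and $(1,\dots,1)$, so the tuple missing from a coatomic algebraic set has at most two distinct coordinate values; a missing tuple in $\{0,1\}^3$ therefore has either exactly one coordinate $0$ or exactly one coordinate $1$, and in that situation the ``flip at $b$'' modification of the corresponding projection is still monotone, so the coatom does lie in $\genwrcl{\Cpotty}$. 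Only a missing tuple with at least two $0$'s and at least two $1$'s — hence living in dimension at least $4$ — escapes this, which is exactly what $(1,1,0,0)$ does.
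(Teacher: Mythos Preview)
Your argument is correct and follows essentially the same route as the paper: you exhibit a coatom of $\{0,1\}^4$ as a solution set in $\Ckarika$ (your missing tuple $(1,1,0,0)$ is just the paper's $(0,0,1,1)$ after a variable permutation), reduce to a single graph\hyp{}atom by meet\hyp{}irreducibility, and derive a contradiction with monotonicity in the four cases for the right\hyp{}hand variable, finishing via Proposition~\ref{prop:pol-hom Cpotty ==> pol-hom Ckarika}. Your write\hyp{}up of the reduction step is in fact a bit more explicit than the paper's, and the closing remark on why three variables do not suffice is a nice addition.
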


\begin{proof}
We can assume without loss of generality that $\alg = \str{\{0,1\}}{\lor,\land}$ with the usual ordering $0<1$.
Let us consider the equation $(x_1 \lor x_2) \land (x_3 \land x_4) = x_3 \land x_4$; the solution set $S=\{0,1\}^4 \setminus \{(0,0,1,1)\}$ of this equation is defined by a quantifier-free primitive positive formula over $C^\circ$.
If $S$ can be defined by a quantifier-free primitive positive formula $\Phi$ over $C^\bullet$, then we can assume without loss of generality that $\Phi$ consists of a single equality, as $S$ misses only one element of $\{0,1\}^4$ (in other words, $S$ is meet-irreducible in the lattice of subsets of $\{0,1\}^4$).
Thus $S$ is the solution set of an equation of the form $f(x_1,x_2,x_3,x_4) = u$, where $u \in \{x_1,x_2,x_3,x_4\}$.
Note that since $f$ is generated by the lattice operations $\lor$ and $\land$, it is a monotone function.
We consider four cases corresponding to the variable $u$.
\begin{enumerate}
\item If $u=x_1$, then $f(x_1,x_2,x_3,x_4)=x_1$ holds for all $(x_1,x_2,x_3,x_4) \in S$  and $f(0,0,1,1)=1$.
% CHANGED!    
%
\\ 
In particular, we have $f(0,1,1,1)=0 < 1=f(0,0,1,1)$, contradicting the monotonicity of $f$. 
\item If $u=x_2$, then $f(x_1,x_2,x_3,x_4)=x_2$ holds for all $(x_1,x_2,x_3,x_4) \in S$  and $f(0,0,1,1)=1$. 
% CHANGED!    
%
\\ 
In particular, we have $f(1,0,1,1)=0 < 1=f(0,0,1,1)$, contradicting the monotonicity of $f$. 
\item If $u=x_3$, then $f(x_1,x_2,x_3,x_4)=x_3$ holds for all $(x_1,x_2,x_3,x_4) \in S$  and $f(0,0,1,1)=0$. 
% CHANGED!    
%
\\ 
In particular, we have $f(0,0,1,0)=1 > 0=f(0,0,1,1)$, contradicting the monotonicity of $f$. 
\item If $u=x_4$, then $f(x_1,x_2,x_3,x_4)=x_4$ holds for all $(x_1,x_2,x_3,x_4) \in S$  and $f(0,0,1,1)=0$. 
% CHANGED!    
%
\\ 
In particular, we have $f(0,0,0,1)=1>0=f(0,0,1,1)$, contradicting the monotonicity of $f$. 
\end{enumerate}
We see that $S$ cannot be defined by a quantifier-free primitive positive formula $\Phi$ over $C^\bullet$, hence $\genwrcl{\Ckarika} \neq \genwrcl{\Cpotty}$, and thus $\strA{\Cpotty}$ is not \polhom{} by Proposition~\ref{prop:pol-hom Cpotty ==> pol-hom Ckarika}.
\end{proof}
 
\begin{theorem}
If $\alg$ is a nontrivial finite lattice and $C=\Clo(\alg)$, then the relational structure $\strA{\Cpotty}$ is not polymorphism\hyp{}homogeneous.
\end{theorem}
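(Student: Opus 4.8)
The plan is to transfer Lemma~\ref{lem:01 lattice not polhom} from the two-element lattice to an arbitrary two-element \emph{sublattice} of $\alg$, exactly as the theorem for nontrivial semilattices was obtained from Lemma~\ref{lem:01 semilattice not polhom}. Fix $a,b\in A$ with $a<b$; then $\{a,b\}$ is a two-element sublattice of $\alg$ on which $a$ is the bottom and $b$ the top. Consider the same equation as in the proof of Lemma~\ref{lem:01 lattice not polhom}, namely $(x_1\lor x_2)\land(x_3\land x_4)=x_3\land x_4$, and let $S\in\Ckarika$ be its solution set over $\alg$. A direct evaluation on $\{a,b\}^4$ gives $S\cap\{a,b\}^4=\{a,b\}^4\setminus\{(a,a,b,b)\}$: if $x_3\land x_4=a$ then the left-hand side is $(x_1\lor x_2)\land a=a$, so the equation holds; if $x_3=x_4=b$ then it reduces to $x_1\lor x_2=b$, that is, to $x_1=b$ or $x_2=b$. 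Hence the unique tuple of $\{a,b\}^4$ outside $S$ is $p:=(a,a,b,b)$, and each of the four tuples obtained from $p$ by switching a single coordinate between $a$ and $b$ lies in $S$.

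Next I would suppose for contradiction that $S$ is defined by a quantifier-free primitive positive formula $\Phi$ over $\Cpotty$. In contrast to the two-element case, $S$ need not be meet-irreducible in the lattice of subsets of $A^4$ (as $|A|$ may exceed $2$), so one cannot assume that $\Phi$ is a single equation; instead one argues locally at $p$. Padding with dummy variables, every conjunct of $\Phi$ has the form $f(x_1,x_2,x_3,x_4)=x_j$ with $f\in C^{(4)}$. Since $p\notin S$ while all four one-coordinate switches of $p$ belong to $S$, there is one conjunct $f(x_1,x_2,x_3,x_4)=x_j$ that fails at $p$ but holds at each of those four tuples.

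Now the monotonicity computation of Lemma~\ref{lem:01 lattice not polhom} goes through on the sublattice $\{a,b\}$: $f$ is a lattice term, hence order-preserving, and it maps $\{a,b\}^4$ into $\{a,b\}$, so on $\{a,b\}^4$ it is a monotone $\{a,b\}$-valued function. As the conjunct fails at $p$, the value $f(p)$ is the element of $\{a,b\}$ distinct from $x_j(p)$. If $j\in\{1,2\}$, then $x_j(p)=a$ and $f(p)=b$; switching the other coordinate of $\{1,2\}$ from $a$ to $b$ yields a tuple $p'\in S$ with $p'\ge p$ and $x_j(p')=a$, whence $f(p')\ge f(p)=b$, so the conjunct fails at $p'\in S$, a contradiction. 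If $j\in\{3,4\}$, the symmetric computation — switching the other coordinate of $\{3,4\}$ from $b$ down to $a$ — gives the same contradiction. Therefore $S$ lies in $\genwrcl{\Ckarika}$ but not in $\genwrcl{\Cpotty}$, so $\genwrcl{\Cpotty}\ne\genwrcl{\Ckarika}$, and Proposition~\ref{prop:pol-hom Cpotty ==> pol-hom Ckarika} shows that $\strA{\Cpotty}$ is not \polhom{}.

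The only real obstacle I anticipate is the point flagged in the second paragraph: the reduction ``$\Phi$ defines $S$'' $\Rightarrow$ ``a single equation defines $S$'', which is automatic for two-element carriers, is no longer available, so the argument must be localised at the missing tuple $p$ and carried out at the level of one conjunct; once that is done, the estimate using order-preservation of lattice terms is identical to the one in Lemma~\ref{lem:01 lattice not polhom}.
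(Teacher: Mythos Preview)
Your proposal is correct and follows essentially the same approach as the paper. The paper chooses $a<b$, uses the same equation, observes $S\cap\{a,b\}^4=\{a,b\}^4\setminus\{(a,a,b,b)\}$, and then argues that if $\Phi$ over $\Cpotty$ defined $S$, then at least one conjunct of $\Phi$ restricted to the sublattice $\{a,b\}$ would define exactly $\{a,b\}^4\setminus\{(a,a,b,b)\}$, whereupon the monotonicity argument of Lemma~\ref{lem:01 lattice not polhom} applies verbatim. Your ``localise at $p$'' step is precisely this: a conjunct failing at $p$ but holding on all of $S$ (hence on the four neighbouring tuples) is exactly a conjunct whose restriction to $\{a,b\}^4$ equals $\{a,b\}^4\setminus\{p\}$, and your case analysis on $j$ reproduces the four cases of the lemma on the sublattice $\{a,b\}$. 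The only difference is presentational: you spell out the monotonicity contradiction explicitly, while the paper defers to the lemma.
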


\begin{proof}
Let $a,b \in A$ such that $a < b$, and let us consider the same equation as in the proof of Lemma~\ref{lem:01 lattice not polhom}. 
Now for the solution set $S$ of this equation we have that $S \cap \{a,b\}^4 = \{a,b\}^4 \setminus \{(a,a,b,b)\}$. 
If $S$ can be defined by a quantifier-free primitive positive formula $\Phi$ over $C^\bullet$, then at least one of the equalities in $\Phi$ defines the set $\{a,b\}^4 \setminus \{(a,a,b,b)\}$ when restricted to the sublattice $\{ a,b \}$, and this leads to a contradiction using the same argument as in the proof of Lemma~\ref{lem:01 lattice not polhom}.
\end{proof}

\subsection{Abelian groups} \label{subsect:groups}

For Abelian groups all six conditions of Figure~\ref{fig:conditions} are equivalent, and these groups have already been determined, so we only need to combine some results from the literature to prove the following theorem.

\begin{theorem}
If $\alg$ is a finite Abelian group and $C=\Clo(\alg)$, then the following conditions are equivalent:
\begin{enumerate}[label=(\roman*)]
\item $\alg$ has property \sdc;
\item $\alg$ is homomorphism\hyp{}homogeneous;
\item $\alg$ is \polhom;
\item $\strA{\Ckarika}$ is \polhom;
\item $\strA{\Cpotty}$ is \polhom;
\item $\alg$ is injective in $\SPfin(\alg)$;
\item $\alg$ is injective in $\HSP(\alg)$;
\item each Sylow-subgroup of $\alg$ is homocyclic, i.e., $\alg \cong \Z_{q_1}^{m_1} \times \dots \times \Z_{q_k}^{m_k}$, where $q_1,\dots,q_k$ are powers of different primes and $m_1,\dots,m_k \in \N$.
\end{enumerate}
\end{theorem}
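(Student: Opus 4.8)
The plan is to graft the four new conditions (ii), (v), (vii), (viii) onto the cycle of equivalences (i)$\Leftrightarrow$(iii)$\Leftrightarrow$(iv)$\Leftrightarrow$(vi) supplied by Corollary~\ref{cor:equivalences}. I would dispose of (v) first. By Proposition~\ref{prop:pol-hom Cpotty ==> pol-hom Ckarika}, $\strA{\Cpotty}$ is \polhom{} exactly when $\strA{\Ckarika}$ is \polhom{} and $\genwrcl{\Cpotty}=\genwrcl{\Ckarika}$, so it is enough to observe that the latter equality holds for every finite Abelian group. One inclusion is the trivial $\genwrcl{\Cpotty}\subseteq\genwrcl{\Ckarika}$. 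For the other, given $f,g\in\Cn$ set $t(x_1,\dots,x_n)=x_1-f(x_1,\dots,x_n)+g(x_1,\dots,x_n)$, again a term operation of $\alg$; then $\Sol(f,g)=\{(a_1,\dots,a_n):a_1=t(a_1,\dots,a_n)\}$ is defined by the quantifier-free atomic formula $t^{\bullet}(x_1,\dots,x_n,x_1)$ over $\Cpotty$, so $\Ckarika\subseteq\genwrcl{\Cpotty}$ and hence $\genwrcl{\Cpotty}=\genwrcl{\Ckarika}$. The group operations are used here in an essential way; the analogue fails for semilattices and lattices, cf.\ Lemmas~\ref{lem:01 semilattice not polhom} and \ref{lem:01 lattice not polhom}. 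Thus (v) is equivalent to (iv).

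Next I would establish (ii)$\Leftrightarrow$(iii)$\Leftrightarrow$(viii). Since (iii)$\Rightarrow$(ii) is trivial, it suffices to prove (ii)$\Rightarrow$(viii)$\Rightarrow$(iii). For (ii)$\Rightarrow$(viii) I argue contrapositively: if some Sylow $p$-subgroup of $\alg$ is not homocyclic, then $\Z_{p^a}$ and $\Z_{p^b}$ with $1\le a<b$ occur as direct summands, so $\alg\cong\Z_{p^a}\times\Z_{p^b}\times H$. The element $u=(0,p^{b-1},0)$ has order $p$, and the map $u\mapsto(p^{a-1},0,0)$ is a homomorphism on $\langle u\rangle$ that extends to no endomorphism of $\alg$: an extension would send $u=p^{b-1}\cdot(0,1,0)$ to $p^{b-1}$ times an element whose first coordinate lies in $p^{b-1}\Z_{p^a}=\{0\}$ (because $b-1\ge a$), contradicting that $(p^{a-1},0,0)$ has nonzero first coordinate. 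Hence $\alg$ is not homomorphism\hyp{}homogeneous. For (viii)$\Rightarrow$(iii): the property ``every Sylow subgroup is homocyclic'' passes to all finite powers $\alg^k$, so by Proposition~\ref{prop:direct powers} it is enough to prove that such an $\alg$ is homomorphism\hyp{}homogeneous. Write $\alg\cong A_1\times\dots\times A_s$ with $A_i\cong\Z_{q_i}^{m_i}$; any subgroup $\Balg\le\alg$ splits as $\Balg=\Balg_1\times\dots\times\Balg_s$ with $\Balg_i=\Balg\cap A_i$, and any $h\colon\Balg\to\alg$ splits accordingly as $h=h_1\times\dots\times h_s$ with $h_i\colon\Balg_i\to A_i$, since homomorphisms preserve primary components. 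As $\Balg_i$ is a $\Z_{q_i}$-module and $A_i=\Z_{q_i}^{m_i}$ is an injective $\Z_{q_i}$-module (the ring $\Z_{q_i}$ is self-injective, being quasi-Frobenius, and finite direct sums of injectives are injective), each $h_i$ extends to an endomorphism of $A_i$, and the product of these extensions extends $h$.

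It remains to incorporate (vii). Since $\SPfin(\alg)\subseteq\HSP(\alg)$, condition (vii) implies (vi); for the converse I would show (viii)$\Rightarrow$(vii). Because $\exp(\alg)=\prod_i q_i$ with the $q_i$ pairwise coprime, the variety $\HSP(\alg)$ consists of all Abelian groups annihilated by $\prod_i q_i$, and by the Chinese remainder theorem every such group is a product $\prod_i M_i$ with each $M_i$ a $\Z_{q_i}$-module. Given $\Balg\le\Calg$ in $\HSP(\alg)$ and a homomorphism $h\colon\Balg\to\alg$, the same primary decomposition reduces the task to extending a $\Z_{q_i}$-module homomorphism along $\Balg_i\le\Calg_i$ with values in $A_i=\Z_{q_i}^{m_i}$, which is possible by injectivity of $A_i$ in the category of $\Z_{q_i}$-modules. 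Hence $\alg$ is injective in $\HSP(\alg)$. Combining everything, all eight conditions are equivalent. Alternatively, one can cite the existing classifications of homomorphism\hyp{}homogeneous finite Abelian groups and of finite Abelian groups injective in their generated variety, and only add the arguments linking these to the relational conditions (iv) and (v).

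The main obstacle, I expect, is the material packaged in (viii): on the one hand, exhibiting a concrete small homomorphism that provably fails to extend whenever a Sylow subgroup fails to be homocyclic; on the other, recognizing that homocyclicity of the Sylow subgroups is precisely what turns the primary components $\Z_{p^a}^{m}$ into injective objects (ultimately the self-injectivity of $\Z_{p^a}$), and checking that the primary decomposition is compatible with all the morphisms and containments involved. The identity $\genwrcl{\Cpotty}=\genwrcl{\Ckarika}$ that carries (v) into the cycle is the other place where a small but genuine idea is needed; the remaining steps are routine once Corollary~\ref{cor:equivalences} is in hand.
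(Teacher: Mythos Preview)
Your proof is correct and follows the same logical scaffolding as the paper: Corollary~\ref{cor:equivalences} for the core equivalences (i), (iii), (iv), (vi); Proposition~\ref{prop:pol-hom Cpotty ==> pol-hom Ckarika} together with the observation $\genwrcl{\Cpotty}=\genwrcl{\Ckarika}$ for (v); Proposition~\ref{prop:direct powers} and closure of the class in (viii) under finite powers to pass between (ii), (iii), (viii); and the chain (viii)$\Rightarrow$(vii)$\Rightarrow$(vi). The only substantive difference is that where the paper cites the literature---Fuchs for the equivalence (ii)$\Leftrightarrow$(viii) (quasi-injectivity of finite Abelian groups) and the folklore result recorded by Garc\'{\i}a and Larri\'{o}n for (viii)$\Rightarrow$(vii)---you supply short self-contained arguments: an explicit non-extendable partial endomorphism when a Sylow subgroup fails to be homocyclic, and the primary decomposition combined with self-injectivity of $\Z_{p^e}$ for the positive direction. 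Your version is more elementary and makes the paper more self-contained; the paper's version is terser and situates the result within the existing literature. Your closing remark that one could instead cite these classifications is exactly what the paper does.
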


\begin{proof}
Conditions (i), (iii), (iv) and (vi) are equivalent by Corollary~\ref{cor:equivalences}.
By Proposition~\ref{prop:pol-hom Cpotty ==> pol-hom Ckarika}, (iv) is equivalent to (v), since we have $\genwrcl{\Cpotty} = \genwrcl{\Ckarika}$ for groups: every equality can be written in an equivalent form where there is only a single variable on the right hand side.
The equivalence of (ii) and (viii) follows from the description of quasi-injective Abelian groups presented as an exercise in \cite{Fuchs1970} (for finite groups quasi-injectivity is equivalent to homomorphism\hyp{}homogeneity).
The class of groups given in (viii) is closed under taking finite direct powers, so we can conclude with the help of Proposition~\ref{prop:direct powers} that (iii) and (viii) are equivalent.
It seems to be a folklore fact that the injective members of the variety of Abelian groups defined by the identity $nx=0$ with $n=q_1 \cdot\ldots\cdot q_k$ are exactly the groups given by (viii) (see, e.g., \cite{GarciaLarrion1982}).
Therefore, (viii) implies (vii), and this completes the proof, as (vii) trivially implies (vi).
\end{proof}

\subsection{Monounary algebras} \label{subsect:monounary}

A \emph{monounary algebra} is an algebra $\alg = \strA{f}$ with a single unary operation $f \in \OA^{(1)}$.
An element $a \in A$ is \emph{cyclic} if there is a natural number $k$ such that $f^k(a)=a$. 
(Here $f^k(a)$ stands for $f( \cdots f(a) \cdots )$ with a $k$-fold repetition of $f$, and we also use the convention $f^0(a)=a$.)
If $A$ is finite, then for every element $a \in A$ there is a least nonnegative integer $\h(a)$, called the \emph{height} of $a$, such that $f^{\h(a)}(a)$ is cyclic.
If $a \in A \setminus f(A)$, i.e., $a$ has no preimage, then we say that $a$ is a \emph{source}. 
(Note that $\h(a)=0$ if and only if $a$ is cyclic; in particular, $\h(a) \geq 1$ for any source $a$.)

Polymorphism\hyp{}homogeneous monounary algebras were characterized by Z.~Farkasov\'{a} and D.~Ja\-ku\-b\'{\i}\-ko\-v\'{a}\hyp{}Stu\-de\-novs\-k\'{a} in \cite{FarkasovaJakubikova2015} using Proposition~\ref{prop:direct powers} and the description of homomorphism\hyp{}homogeneous monounary algebras obtained by \'{E}.~Jung\'{a}bel and D.~Ma\v{s}ulovi\'{c} \cite{JungabelMasulovic2013}.
As an illustration of the results of Section~\ref{sect:equivalences}, we present a simple self-contained proof, which relies on the following technical lemma about quantifier elimination in monounary algebras.

\begin{lemma}\label{lem:monounary QE}
Let $\alg = \strA{f}$ be a finite monounary algebra, and let $C=\Clo(\alg)$.
The algebra $\alg$ is \polhom{} if and only if for each natural number $k$, there is a quantifier-free primitive positive formula $\Psi_k(x)$ over $\Ckarika$ such that
\begin{equation}\label{eq:Psi}
\forall a \in A \colon \Psi_k(a) \iff \exists a_0 \in A \colon a = f^k(a_0).
\end{equation}    
\end{lemma}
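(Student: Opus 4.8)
The plan is to obtain both directions from the machinery of Section~\ref{sect:equivalences}. By Theorem~\ref{thm:pol-hom Ckarika <==> pol-hom alg} together with Proposition~\ref{prop:QE <==> pol-hom}, the algebra $\alg$ is \polhom{} if and only if $\genwrcl{\Ckarika}=\genrcl{\Ckarika}$, i.e., every primitive positive formula over $\Ckarika$ is equivalent to a quantifier-free one. Since $C=\Clo(f)$, every $n$-ary term operation of $\alg$ has the form $(x_1,\dots,x_n)\mapsto f^m(x_i)$, so $\Ckarika$ consists precisely of the relations $\Sol\bigl(f^m(x_i),f^{m'}(x_j)\bigr)$, and a quantifier-free primitive positive formula over $\Ckarika$ is just a conjunction of atoms $f^m(z)=f^{m'}(z')$ with $z,z'$ among the free variables. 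I will use freely the identity $\h\bigl(f^j(w)\bigr)=\max\{0,\h(w)-j\}$; it implies that for $c>e$ the atom $f^c(y)=f^e(y)$ holds exactly when $\h(y)\le e$ and the (then necessarily cyclic) element $f^e(y)$ has period dividing $c-e$, so that a conjunction of such atoms is equivalent to a single atom $f^{E+g}(y)=f^E(y)$, where $E$ is the least of the exponents $e$ and $g$ the greatest common divisor of the differences $c-e$.

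\emph{Forward direction.} The binary relation $\{(a,b):a=f^k(b)\}$ equals $\Sol\bigl(f^0(x_1),f^k(x_2)\bigr)\in\Ckarika$, so $\exists a_0\,(x=f^k(a_0))$ is a primitive positive formula over $\Ckarika$, and it defines the unary relation $f^k(A)=\{f^k(a):a\in A\}$. If $\alg$ is \polhom, then $f^k(A)\in\genwrcl{\Ckarika}$, so it is defined by a quantifier-free primitive positive formula $\Psi_k(x)$ over $\Ckarika$; and $\Psi_k(a)$ holds iff $a\in f^k(A)$ iff $\exists a_0\colon a=f^k(a_0)$, which is \eqref{eq:Psi}.

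\emph{Converse.} Assuming the $\Psi_k$ exist, I would prove $\genrcl{\Ckarika}\rh\genwrcl{\Ckarika}$ by eliminating the existential quantifiers one at a time, so it suffices to rewrite $\exists y\,\Phi(x_1,\dots,x_n,y)$, where $\Phi$ is a conjunction of atoms over $\Ckarika$, as a quantifier-free primitive positive formula over $\Ckarika$. Split $\Phi$ into: the conjunction $\Phi_0(\bar x)$ of atoms not mentioning $y$; atoms $f^{a_i}(y)=f^{b_i}(x_{j_i})$ for $i=1,\dots,s$; and the atoms having $y$ on both sides, which by the observation above reduce to a single condition ``$y\in T$'', where $T$ is the solution set of $f^{E+g}(y)=f^E(y)$ (and $T=A$ if there are no such atoms; in either case $T\in\Ckarika$). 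Put $w_i=f^{b_i}(x_{j_i})$ and let $a_0=\min_i a_i$, attained at $i=0$. Then a common $y$ exists if and only if the compatibility conditions $f^{a_l-a_0}(w_0)=w_l$ ($l=1,\dots,s$) hold and $w_0\in f^{a_0}(T)$; the compatibility conditions are again atoms over $\Ckarika$ (namely $f^{a_l-a_0+b_0}(x_{j_0})=f^{b_l}(x_{j_l})$), so the crux is to describe the unary set $f^{a_0}(T)$ by a quantifier-free primitive positive formula over $\Ckarika$. A short computation with the height identity gives $f^{a_0}(T)=\Sol\bigl(f^{E-a_0+g}(x),f^{E-a_0}(x)\bigr)\cap f^{a_0}(A)$ when $a_0\le E$, while $f^{a_0}(T)=\Sol\bigl(f^g(x),f^0(x)\bigr)$ when $a_0>E$ (and $f^{a_0}(T)=f^{a_0}(A)$ if $T=A$); the first is defined by one atom over $\Ckarika$ together with $\Psi_{a_0}$, the others already lie in $\Ckarika$. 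Substituting $w_0=f^{b_0}(x_{j_0})$ for the variable of these formulas merely replaces each atom $f^\alpha(x)=f^\beta(x)$ by $f^{\alpha+b_0}(x_{j_0})=f^{\beta+b_0}(x_{j_0})$ and keeps $\Psi_{a_0}$ quantifier-free primitive positive over $\Ckarika$, yielding the desired formula. The degenerate case $s=0$ is handled directly: $\exists y\,\Phi$ equals $\Phi_0(\bar x)$ if $T\ne\emptyset$, and $\Phi_0(\bar x)\wedge\bigl(f^{E+g}(x_1)=f^E(x_1)\bigr)$ if $T=\emptyset$, the last atom then having empty solution set.

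The main obstacle is the bookkeeping in the converse: one must track the iterate exponents and the heights carefully enough to recognize that the single step which cannot be carried out inside quantifier-free primitive positive logic over $\Ckarika$ — and for which the hypothesis on the $\Psi_k$ is exactly what is needed — is the membership condition $w_0\in f^{a_0}(A)$, every other ingredient (compatibility, the $y$-only atoms, the two descriptions of $f^{a_0}(T)$) being a plain conjunction of equations between iterates of single variables.
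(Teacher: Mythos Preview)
Your argument is correct. The forward direction is the same as the paper's. For the converse you take a genuinely different route: the paper eliminates a single quantifier by introducing a weight $w(\Phi)$ measuring how deeply $y$ is involved and applying four syntactic rewriting rules that strictly decrease this weight, until $y$ occurs in at most one atom; then three easy cases finish. You instead give a one-shot structural analysis: separate the atoms not containing $y$, collapse all $y$-only atoms into a single $f^{E+g}(y)=f^{E}(y)$ via the height/cycle description, choose the mixed atom with minimal exponent as a pivot, and rewrite the existential as the conjunction of the obvious compatibility equalities together with the membership $w_0\in f^{a_0}(T)$, whose explicit form you compute case-by-case. Both proofs isolate the same irreducible obstruction---expressing ``$x$ lies in the image of $f^{k}$''---and invoke $\Psi_k$ precisely there. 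Your approach is more semantic and makes the role of $\Psi_k$ transparent, at the cost of the height/cycle bookkeeping and the case split on $a_0\le E$ versus $a_0>E$; the paper's rewriting is more elementary and avoids computing $f^{a_0}(T)$, at the cost of checking that the rewriting terminates. A minor notational wrinkle: you index the mixed atoms by $i=1,\dots,s$ and then relabel the one with minimal exponent as $i=0$, so the compatibility conditions should range over the remaining $s-1$ indices rather than ``$l=1,\dots,s$''.
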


\begin{proof}
We use Theorem~\ref{thm:pol-hom Ckarika <==> pol-hom alg}: we prove that the existence of $\Psi_k$ is necessary and sufficient for polymorphism\hyp{}homogeneity of $\strA{\Ckarika}$. 
By Proposition~\ref{prop:QE <==> pol-hom}, the necessity is obvious; to prove sufficiency, let us consider an arbitrary primitive positive formula $\Phi(x_1,\dots,x_n)$ over $\Ckarika$.
We show how to eliminate one quantifier; repeatedly applying this procedure we can eliminate all quantifiers from $\Phi$.
So we may assume without loss of generality that $\Phi$ involves only one quantifier, hence it has the following form:
\begin{equation*}\label{eq:Phi}
\Phi(  x_{1},\dots,x_{n}) = \exists y \bigwith_{i=1}^{t} \bigl( f^{r_i}(u_i) = f^{s_i}(v_i) \bigr),
\end{equation*}
where $t,r_i,s_i$ are nonnegative integers, and the variables $u_i,v_i$ belong to the set $\{ x_1,\dots,x_n,y \}$ for $i=1,\dots,t$.
We define the weight of $\Phi$ as
\[
w(\Phi) = \sum_{\substack{i=1,\dots,t \\ u_i=y}}(r_i+1) + \sum_{\substack{i=1,\dots,t \\ v_i=y}}(s_i+1).
\]
Informally speaking, $w(\Phi)$ shows how ``deeply'' $y$ is involved in $\Phi$.

If $y$ occurs in at least two equalities in $\Phi$, then we can use (at least) one of the following four types of substitutions to decrease the weight of the formula (we omit trivial equalities):
\begin{equation*}
\arraycolsep=0.1pt
\renewcommand{\arraystretch}{1.5}
\begin{array}{rcl@{\hspace{1pt}}c@{\hspace{1pt}}rcl@{\hspace{2pt}}c@{\hspace{2pt}}rcl@{\hspace{1pt}}c@{\hspace{1pt}}rcl@{\hspace{3pt}}l}
f^{k}(y) & = & f^{\ell}(x_i) & \with & f^{m}(y) & = & f^{n}(x_j) & \rightsquigarrow & f^{k-m+n}(x_j) & = & f^\ell(x_i) & \with & f^{m}(y) & = & f^{n}(x_j), & \text{ if } k \geq m; \\
f^{k}(y) & = & f^{\ell}(x_i) & \with & f^{m}(y) & = & f^{n}(y) & \rightsquigarrow & f^{k-m+n}(y) & = & f^\ell(x_i) & \with & f^{m}(y) & = & f^{n}(y), &\text{ if } k \geq m, \ m > n;\\
f^{k}(y) & = & f^{\ell}(x_i) & \with & f^{m}(y) & = & f^{n}(y) & \rightsquigarrow & f^{m-k+\ell}(x_i) & = & f^{n}(y) & \with & f^{k}(y) & = & f^\ell(x_i), &\text{ if } k < m, \ m > n;\\
f^{k}(y) & = & f^{\ell}(y) & \with & f^{m}(y) & = & f^{n}(y) & \rightsquigarrow & f^{k-m+n}(y) & = & f^\ell(y) & \with & f^{m}(y) & = & f^{n}(y), &\text{ if } k \geq m, \ k > \ell, \ m > n.
\end{array}
\end{equation*}
After finitely many steps we arrive at a formula $\Phi'$ such that $\Phi'$ is equivalent to $\Phi$, and it is not possible to decrease the weight of $\Phi'$ any more using the substitutions above. 
This implies that the variable $y$ appears in at most one equality in $\Phi'$.
We have one of the following three cases for $\Phi'$.
\begin{enumerate}
\item If $y$ does not appear at all, then we can simply drop the quantifier $\exists y$ from $\Phi'$.
\item If $y$ appears in an equality of the form $f^{k}(y) = f^{\ell}(y)$, then there is no ``interaction'' between $y$ and the other variables. 
If there is an element $a \in A$ such that $f^{k}(a) = f^{\ell}(a)$, then we can again omit the quantifier $\exists y$ and the equality $f^{k}(y) = f^{\ell}(y)$ from $\Phi'$, and the resulting quantifier-free formula is equivalent to $\Phi'$ (hence also equivalent to $\Phi$).
If there is no element $a \in A$ such that $f^{k}(a) = f^{\ell}(a)$, then $\Phi'(x_1,\dots,x_n)$ is never satisfied: it defines the empty $n$-ary relation. 
In this case the empty relation can be defined by the quantifier-free formula $f^{k}(x_1) = f^{\ell}(x_1)$, thus this formula is equivalent to $\Phi$.
\item If $y$ appears in an equality of the form $f^{k}(y) = f^{\ell}(x_i)$, then let $\Phi''(x_1,\dots,x_n)$ be the formula that is obtained from $\Phi'$ by deleting the quantifier $\exists y$ and the equality $f^{k}(y) = f^{\ell}(x_i)$.
Then the quantifier-free formula $\Phi''(x_1,\dots,x_n) \with \Psi_k(f^{\ell}(x_i))$ is equivalent to $\Phi$, according to \eqref{eq:Psi}.\qedhere
\end{enumerate}
\end{proof}

\begin{theorem}[\cite{FarkasovaJakubikova2015}]\label{thm:monounary polhom}
If $\alg = \strA{f}$ is a finite monounary algebra and $C=\Clo(\alg)$, then the following conditions are equivalent:
\begin{enumerate}[label=(\roman*)]
\item $\alg$ has property \sdc;
\item $\alg$ is \polhom;
\item $\strA{\Ckarika}$ is \polhom;
\item $\alg$ is injective in $\SPfin(\alg)$;
\item Either $\alg$ has no sources, or all sources of $\alg$ have the same height: $\forall a,b \in A \setminus f(A)\colon \h(a)=\h(b)$.
\end{enumerate}
\end{theorem}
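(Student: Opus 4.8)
The plan is to reduce everything to a statement about the sets $f^k(A)$ and then run a short structural analysis of finite monounary algebras. Conditions (i)--(iv) are equivalent by Corollary~\ref{cor:equivalences}, so it suffices to establish (ii)$\iff$(v), and by Lemma~\ref{lem:monounary QE} the algebra $\alg$ is \polhom{} if and only if, for every $k\in\N$, the set $f^k(A)=\{\,a\in A: a=f^k(a_0)\text{ for some }a_0\in A\,\}$ is definable by a quantifier-free primitive positive formula over $\Ckarika$. I would set up the usual picture: each connected component of $\alg$ is a single cycle with rooted trees attached to its vertices, $\h(a)$ is the distance from $a$ to the cycle, and the sources are exactly the leaves, i.e.\ the elements of $A\setminus f(A)$. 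Two easy facts to record are that $f$ maps a non-cyclic element of height $m\ge1$ to one of height $m-1$ (and a preimage of such an element has height $m+1$), so that a non-cyclic $b$ lies in $f^k(A)$ precisely when some source of height at least $\h(b)+k$ is joined to $b$ by a backward $f$-path, whereas every cyclic element lies in all the $f^k(A)$; writing $L$ for the least common multiple of the cycle lengths, the operation $f^L$ fixes exactly the cyclic elements.

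Next I would pin down which unary relations are quantifier-free primitive positive definable over $\Ckarika$. Since the term operations of $\alg$ are precisely the maps $(x_1,\dots,x_n)\mapsto f^j(x_i)$, every equation over $C$ in one variable has the form $f^r(x)=f^s(x)$, whose solution set (for $r>s$) is $\{\,c:\h(c)\le s\text{ and }\ell_c\mid(r-s)\,\}$, where $\ell_c$ denotes the cycle length in the component of $c$. Intersecting finitely many of these, one finds that every quantifier-free primitive positive formula over $\Ckarika$ in one variable defines either $A$ itself or a set of the shape $\{\,c:\h(c)\le S\text{ and }\ell_c\mid G\,\}$. As each $f^k(A)$ contains all cyclic elements, if $f^k(A)$ had the second shape then $G$ would be divisible by every cycle length and the divisibility clause would be vacuous; hence $f^k(A)$ is definable over $\Ckarika$ if and only if $f^k(A)=A$ or $f^k(A)=\{\,c:\h(c)\le S\,\}$ for some $S\ge0$.

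With this in hand both implications are quick. If (v) holds and $\alg$ has no sources, then $f$ is a bijection and $f^k(A)=A$; if all sources have the common height $n$, then no element has height exceeding $n$ (a height-maximal non-cyclic element must be a source), every non-cyclic $b$ has a source of height $n$ above it, and therefore $f^k(A)=\{\,c:\h(c)\le\max(n-k,0)\,\}$, which is defined by the single equation $f^{L+m}(x)=f^m(x)$ with $m=\max(n-k,0)$ (this equation expresses that $f^m(x)$ is cyclic). Conversely, if (v) fails, take sources $\sigma_1,\sigma_2$ with $p:=\h(\sigma_1)<\h(\sigma_2)=:q$: the element $f^{q-p}(\sigma_2)$ has height $p$ and lies in $f(A)$, while $\sigma_1$ also has height $p$ but lies outside $f(A)$, so $f(A)$ is neither $A$ nor any $\{\,c:\h(c)\le S\,\}$; by the previous paragraph $f(A)$ is not definable over $\Ckarika$, the formula $\Psi_1$ therefore does not exist, and Lemma~\ref{lem:monounary QE} yields that $\alg$ is not \polhom.

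The part I expect to require the most care is the structural bookkeeping underlying the second paragraph: verifying precisely that membership of a non-cyclic element in $f^k(A)$ depends only on the heights of the sources above it, and that the congruence parameter $G$ may always be dropped when $f^k(A)$ is being described. Once those two points are settled, the rest is a direct computation with heights.
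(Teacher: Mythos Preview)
Your proposal is correct and follows essentially the same route as the paper: both reduce to (ii)$\iff$(v) via Corollary~\ref{cor:equivalences}, invoke Lemma~\ref{lem:monounary QE} to recast (ii) as the existence of the formulas $\Psi_k$, and construct the same explicit $\Psi_k$ (namely $f^{m}(x)=f^{m+L}(x)$ with $m=\max(n-k,0)$) for the direction (v)$\Rightarrow$(ii). The only organizational difference is in the converse: the paper assumes a putative $\Psi_1=\bigwedge_i\bigl(f^{r_i}(x)=f^{s_i}(x)\bigr)$ and derives a contradiction directly by evaluating it at $f(b)$ and at the shorter source $a$, whereas you first classify all quantifier-free pp-definable unary relations over $\Ckarika$ as $A$ or $\{c:\h(c)\le S,\ \ell_c\mid G\}$ and then observe that $f(A)$ cannot have this shape when two sources have different heights---this is the same computation packaged slightly more structurally.
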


\begin{proof}
Conditions (i)--(iv) are equivalent by Corollary~\ref{cor:equivalences}, so it suffices to prove the equivalence of (ii) and (v).
As a preliminary observation, let us note that an element $a \in A$ is cyclic if and only if $a=f^{\ell}(a)$, where $\ell$ is the least common multiple of the lengths of the cycles of $\alg$.

Suppose that (v) holds, and assume first that there are no sources in $\alg$. Then every element is cyclic, thus the formula $x=x$ can be chosen for $\Psi_k(x)$ in \eqref{eq:Psi} for all $k \in \N$. Suppose now that all sources in $\alg$ have the same height $n \geq 1$. 
If $k \geq n$, then an arbitrary element $a \in A$ can be written as $a=f^k(a_0)$ for a suitable $a_0 \in A$ if and only if $a$ is cyclic. Thus, the formula $x = f^{\ell}(x)$ can be chosen for $\Psi_k(x)$, whenever $k \geq n$. Similarly, if $k<n$, then $\exists a_0 \in A\colon a=f^k(a_0)$ holds if and only if $\h(a) \leq n-k$, i.e., if $f^{n-k}(a)$ is cyclic. Therefore, we can put $f^{n-k}(x)=f^{n-k+\ell}(x)$ for $\Psi_k(x)$ in this case. This proves that (v) implies (ii), according to Lemma~\ref{lem:monounary QE}.

Conversely, assume that there exist formulas $\Psi_k(x)$ satisfying \eqref{eq:Psi}.
We can write $\Psi_1(x)$ in the following form, and we can assume without loss of generality that $r_i < s_i$ for $i=1,\dots,t$:
\begin{equation*}\label{eq:Psi1}
\Psi_1(x) = \bigwith_{i=1}^{t} \bigl( f^{r_i}(x) = f^{s_i}(x) \bigr).
\end{equation*}
If $a$ is a cyclic element, then $\Psi_1(a)$ must hold according to \eqref{eq:Psi}.
This implies that $r_i \equiv s_i \pmod{\ell}$ for all $i \in \{ 1.\dots,t \}$.

Now suppose for contradiction that there exist sources $a,b \in A \setminus f(A)$ with $\h(a) < \h(b)$.
Clearly, $\Psi_1(f(b))$ must be true by \eqref{eq:Psi}, thus we have $f^{r_i}(f(b)) = f^{s_i}(f(b))$ for each $i$.
This is equivalent to $f^{r_i+1}(b) = f^{s_i+1}(b)$, which implies that $f^{r_i+1}(b)$ is a cyclic element (recall that $r_i < s_i$), hence we have $\h(b) \leq r_i+1$.
Since $\h(a) \leq \h(b)-1$, we can conclude that $\h(a) \leq r_i$, i.e., $f^{r_i}(a)$ is a cyclic element.
The length of every cycle is a divisor of $\ell$, and we know that $r_i \equiv s_i \pmod{\ell}$, thus $f^{r_i}(a) = f^{s_i}(a)$.
This means that $\Psi_1(a)$ is true, contradicting the fact that $a$ is a source.
\end{proof}

Next we determine finite monounary algebras corresponding to the top left box of Figure~\ref{fig:conditions}.

\begin{theorem}\label{thm:monounary Cpotty polhom}
Let $\alg = \strA{f}$ be a finite monounary algebra, and let $C=\Clo(\alg)$. Then the relational structure $\strA{\Cpotty}$ is \polhom{} if and only if $f$ is either bijective or constant.
\end{theorem}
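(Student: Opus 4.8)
The plan is to exploit Proposition~\ref{prop:pol-hom Cpotty ==> pol-hom Ckarika}: the structure $\strA{\Cpotty}$ is \polhom{} precisely when $\strA{\Ckarika}$ is \polhom{} \emph{and} $\genwrcl{\Cpotty}=\genwrcl{\Ckarika}$. The first step is to make both sides concrete. Since every term operation of $\alg=\strA{f}$ is of the form $(x_1,\dots,x_n)\mapsto f^{j}(x_i)$ with $j\geq 0$, the relations in $\Cpotty$ are, up to inessential coordinates, the binary graphs $G_j=\{(a,b)\in A^2: f^j(a)=b\}$ for $j\geq0$, and those in $\Ckarika$ are, again up to inessential coordinates, the binary relations $R_{r,s}=\{(a,b): f^r(a)=f^s(b)\}$ together with the unary relations $U_{r,s}=\{a: f^r(a)=f^s(a)\}$ for $r,s\geq0$. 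Consequently, a quantifier-free primitive positive formula over $\Cpotty$ is, up to equivalence, a conjunction of atoms of the form $f^{j}(u)=v$ with $j\geq0$ and $u,v$ among its free variables; in particular every member of $\genwrcl{\Cpotty}$ is an intersection of relations of the shape $\{\mathbf a: f^j(a_i)=a_i\}=\mathrm{Fix}(f^j)$-at-coordinate-$i$, $\{\mathbf a: f^j(a_i)=a_\ell\}$, and $\{\mathbf a: a_i=a_\ell\}$.

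For the ``if'' direction, observe first that in both cases $\alg$ is \polhom: if $f$ is bijective then $\alg$ has no sources, and if $f$ is constant then every source has height~$1$, so in either case condition~(v) of Theorem~\ref{thm:monounary polhom} holds, whence $\strA{\Ckarika}$ is \polhom{} by Corollary~\ref{cor:equivalences} and Proposition~\ref{prop:SDC <==> pol-hom rel}. It thus remains to check $\genwrcl{\Cpotty}=\genwrcl{\Ckarika}$, and since $\Cpotty\subseteq\Ckarika$ it suffices to rewrite each generator $R_{r,s},U_{r,s}$ of $\Ckarika$ as a quantifier-free primitive positive formula over $\Cpotty$. If $f$ is constant, say with unique fixed point $c$, then $f^j$ is the constant map $c$ for $j\geq1$ while $f^0=\mathrm{id}$, so each $R_{r,s}$ or $U_{r,s}$ is (up to inessential coordinates) one of $A^n$, a diagonal, $\{a: f(a)=b\}=A\times\{c\}$, its converse, or $\{a: f(a)=a\}=\{c\}$, all of which lie in $\genwrcl{\Cpotty}$. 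If $f$ is bijective, let $N$ be the least common multiple of the cycle lengths, so that $f^{-1}=f^{N-1}\in C$; then $f^r(a)=f^s(b)$ is equivalent to $a=f^{t}(b)$ for the unique $t\in\{0,\dots,N-1\}$ with $t\equiv s-r\pmod N$, and $f^r(a)=f^s(a)$ is equivalent by cancellation to $f^{|r-s|}(a)=a$, so again every generator of $\Ckarika$ lies in $\genwrcl{\Cpotty}$. Hence $\genwrcl{\Cpotty}=\genwrcl{\Ckarika}$ and $\strA{\Cpotty}$ is \polhom.

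For the ``only if'' direction I would argue contrapositively: assuming $f$ is neither bijective nor constant, I claim the relation $\ker f=R_{1,1}=\{(a,b): f(a)=f(b)\}\in\Ckarika$ does not belong to $\genwrcl{\Cpotty}$, which by Proposition~\ref{prop:pol-hom Cpotty ==> pol-hom Ckarika} already shows $\strA{\Cpotty}$ is not \polhom. Suppose $\ker f$ were the solution set of a quantifier-free primitive positive formula $\Phi(x,y)$ over $\Cpotty$, i.e.\ a conjunction of atoms $f^j(z)=z'$ with $z,z'\in\{x,y\}$. Because $A$ is finite and $f$ is not bijective, $f$ is not injective, so $\Delta\subsetneq\ker f$; hence $\Phi$ can contain neither the atom $x=y$ nor $y=x$, and it can contain no atom $f^j(x)=y$ or $f^j(y)=x$ with $j\geq1$ either, since such an atom would force $\Delta\subseteq\{(a,b):f^j(a)=b\}$, i.e.\ $f^j=\mathrm{id}$, contradicting non-bijectivity. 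Thus $\Phi$ involves only atoms $f^j(x)=x$ and $f^j(y)=y$, so $\ker f=P\times Q$ for some $P,Q\subseteq A$; reflexivity of $\ker f$ forces $P=Q=A$, i.e.\ $\ker f=A^2$, which means $f$ is constant — a contradiction. The bulk of the work is the routine bookkeeping in the ``if'' direction (especially the rewriting in the bijective case, where one must use that $f^{-1}$ is a term operation); the genuine point is the short reflexivity argument above showing that $\ker f$ escapes $\genwrcl{\Cpotty}$ whenever $f$ is neither bijective nor constant.
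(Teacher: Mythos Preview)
Your proof is correct and follows essentially the same route as the paper: both directions hinge on Proposition~\ref{prop:pol-hom Cpotty ==> pol-hom Ckarika}, and the ``only if'' direction is proved by showing that $\ker f=\{(a,b):f(a)=f(b)\}\in\Ckarika$ cannot lie in $\genwrcl{\Cpotty}$ unless $f$ is bijective or constant. The one noteworthy difference is in the ``only if'' direction: the paper first invokes Theorem~\ref{thm:monounary polhom} to reduce to the case where sources exist with a common height, then picks a source $a$ and uses $\Phi(a,a)$ to force every exponent $r_i$ in the atoms to be $0$ (since $f^{r_i}(a)=a$ fails for any non-cyclic $a$ when $r_i\geq1$); you instead argue directly and contrapositively, ruling out cross atoms $f^j(x)=y$ via the observation that $\Delta\subseteq\ker f$ would then force $f^j=\mathrm{id}$, and finishing with the $P\times Q$ reflexivity argument. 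Your version avoids the detour through Theorem~\ref{thm:monounary polhom} in this direction and is slightly more self-contained; the paper's version is marginally shorter once Theorem~\ref{thm:monounary polhom} is available.
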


\begin{proof}
If $f$ is constant, then it is clear that $\strA{\Cpotty}$ is \polhom.
Assume now that $f$ is bijective. Then the condition of Theorem~\ref{thm:monounary polhom} is satisfied (there are no sources at all), so $\alg$ is \polhom, and thus by Theorem~\ref{thm:pol-hom Ckarika <==> pol-hom alg} $\strA{\Ckarika}$ is \polhom{} as well.
Therefore, by Proposition~\ref{prop:pol-hom Cpotty ==> pol-hom Ckarika}, it suffices to show that $\genwrcl{\Ckarika} = \genwrcl{\Cpotty}$.
This is clear, as any equality of the form $f^{k}(x)=f^{\ell}(y)$ with $k<\ell$ is equivalent to $x=f^{\ell-k}(y)$, since $f$ is bijective (here $x$ and $y$ might be the same variable).

For the other direction, let us suppose that $\strA{\Cpotty}$ is \polhom.
By Proposition~\ref{prop:pol-hom Cpotty ==> pol-hom Ckarika}, $\strA{\Ckarika}$ is also \polhom, and then Theorem~\ref{thm:monounary polhom} (together with Theorem~\ref{thm:pol-hom Ckarika <==> pol-hom alg}) implies that either there are no sources, or there is an integer $n \geq 1$ such that every source in $\alg$ has height $n$.
If there are no sources in $\alg$, then every element is cyclic, and therefore $f$ is bijective. From now on let us suppose that $\alg$ has sources with a common height $n$.
Proposition~\ref{prop:pol-hom Cpotty ==> pol-hom Ckarika} shows that there exists a quantifier-free primitive positive formula $\Phi(x,y)$ over $\Cpotty$ such that $\Phi(x,y)$ is equivalent to $f(x)=f(y)$.
We can write $\Phi(x,y)$ in the following form:
\begin{equation*}
\Phi(x,y) = \bigwith_{i=1}^{t} \bigl( f^{r_i}(u_i) = v_i \bigr),
\end{equation*}
where $t,r_i$ are nonnegative integers, and $u_i,v_i \in \{ x,y \}$ for $i=1,\dots,t$.
Obviously, $\Phi(a,a)$ holds for every element $a \in A$.
Let us choose $a$ to be of height $n$, i.e., let $a$ be a source.
Then $f^{r_i}(u_i) = v_i$ holds for $u_i=v_i=a$ if and only if $r_i=0$, thus $\Phi(x,y)$ is equivalent either to $x=y$ or to $x=x$.
Taking into account that $\Phi(x,y)$ is also equivalent to $f(x)=f(y)$, we can conclude that $f(x)=f(y) \iff x=y$ or $f(x)=f(y) \iff x=x$.
In the first case $f$ is a bijection, and in the second case $f$ is constant.
\end{proof}

Injective objects in the category of all monounary algebras were determined by D.~Ja\-ku\-b\'{\i}\-ko\-v\'{a}\hyp{}Stu\-de\-novs\-k\'{a} \cite{Jakubikova1998}; in the finite case these are exactly the monounary algebras $\alg = \strA{f}$ where $f$ is bijective and has a fixed point. 
However, in order to complete the picture of Figure~\ref{fig:conditions} for monounary algebras, we need to describe those monounary algebras $\alg$ that are injective in the variety $\HSP\alg$.
This has been done by D.~Ja\-ku\-b\'{\i}\-ko\-v\'{a}\hyp{}Stu\-de\-novs\-k\'{a} and G.~Cz\'{e}dli, but this result appeared only in Hungarian in the masters thesis \cite{Jeges2000} of T.~Jeges, a student of G.~Cz\'{e}dli.

\begin{theorem}[\cite{Jeges2000}]\label{thm:monounary injective}
A finite monounary algebra $\alg = \strA{f}$ is injective in the variety $\HSP(\alg)$ if and only if all of its sources have the same height and it has a one-element subalgebra (i.e., $f$ has a fixed point). 
\end{theorem}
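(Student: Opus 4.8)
The plan is to treat the two implications separately; the ``sources'' part of the condition comes essentially for free from the results of this section, and the bulk of the work is a hands-on extension argument for the ``if'' direction. Throughout let $\ell$ denote the least common multiple of the cycle lengths of $\alg$, and set $h=\max_{a\in A}\h(a)$ (so $h$ equals the common height of the sources of $\alg$, or $h=0$ if $\alg$ has no sources). By Birkhoff's theorem, $\HSP(\alg)$ consists of those monounary algebras that satisfy all identities valid in $\alg$; in particular every $\Calg\in\HSP(\alg)$ satisfies $f^{h}(x)=f^{h+\ell}(x)$, so in $\Calg$ every element reaches a cyclic element in at most $h$ steps, all heights are at most $h$, and all cycle lengths divide $\ell$.

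\emph{Necessity.} Assume $\alg$ is injective in $\HSP(\alg)$. Since $\SPfin(\alg)\subseteq\HSP(\alg)$, the algebra $\alg$ is injective in $\SPfin(\alg)$, hence \polhom{} by Corollary~\ref{cor:equivalences}, hence all its sources have the same height by Theorem~\ref{thm:monounary polhom}. Suppose now, for contradiction, that $f$ has no fixed point. Then $\alg$ satisfies no identity of the form $f^{i}(x)=f^{j}(y)$ with $x\neq y$, since such an identity would make $f^{i}$ a constant map, and a constant iterate of $f$ forces $f$ to have a fixed point. Consequently the equational theory of $\alg$ uses only one-variable identities, each of which also holds in the disjoint union $\alg\sqcup\mathbf{1}$ (with $\mathbf{1}$ the one-element algebra), so $\alg\sqcup\mathbf{1}\in\HSP(\alg)$. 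But $\alg$ is a subalgebra of $\alg\sqcup\mathbf{1}$ and the identity map $\mathrm{id}_{\alg}\colon\alg\to\alg$ admits no extension to a homomorphism $\alg\sqcup\mathbf{1}\to\alg$, because the adjoined point is a fixed point of $f$ and would have to map to a fixed point of $\alg$ — a contradiction.

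\emph{Sufficiency.} Assume all sources of $\alg$ have the common height $h$ (or $\alg$ has none, $h=0$) and fix a fixed point $e$ of $f$. Let $\Balg\leq\Calg\in\HSP(\alg)$ and let $\varphi\colon\Balg\to\alg$ be a homomorphism; I want to extend it to $\Calg$. By Zorn's lemma there is a maximal homomorphic extension $\varphi^{*}\colon\Balg^{*}\to\alg$ with $\Balg\leq\Balg^{*}\leq\Calg$ (the union of a chain of such extensions is again one). Assume $\Balg^{*}\neq\Calg$ and pick $c\in\Calg\setminus\Balg^{*}$. If $f^{j}(c)\notin\Balg^{*}$ for all $j\geq 0$, then $\genalg{\Balg^{*}\cup\{c\}}$ is the disjoint union of $\Balg^{*}$ with the finite forward orbit of $c$, and we may extend $\varphi^{*}$ by collapsing that whole orbit onto $e$ — here the fixed point is used. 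Otherwise let $m\geq 1$ be least with $b:=f^{m}(c)\in\Balg^{*}$; one checks that $c$ is not cyclic, that $m\leq\h_{\Calg}(c)\leq h$, and that $\genalg{\Balg^{*}\cup\{c\}}=\Balg^{*}\cup\{c,f(c),\dots,f^{m-1}(c)\}$ with the listed elements pairwise distinct and outside $\Balg^{*}$. It now suffices to find $a_{0}\in A$ with $f^{m}(a_{0})=\varphi^{*}(b)$, for then setting $\widehat\varphi(f^{i}(c))=f^{i}(a_{0})$ for $0\leq i<m$ (and $\widehat\varphi=\varphi^{*}$ on $\Balg^{*}$) defines a homomorphic extension of $\varphi^{*}$. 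Such an $a_{0}$ exists: a homomorphism cannot increase height, so $\h_{\alg}(\varphi^{*}(b))\leq\h_{\Calg}(b)=\h_{\Calg}(c)-m\leq h-m$, and by the proof of Theorem~\ref{thm:monounary polhom} this inequality is precisely what guarantees that $\varphi^{*}(b)$ lies in the image of $f^{m}$ on $A$ when $m<h$ (and when $m=h$ it forces $\varphi^{*}(b)$ to be cyclic, which again lies in the image of $f^{h}$). In either case $\varphi^{*}$ was not maximal, a contradiction; hence $\Balg^{*}=\Calg$ and $\alg$ is injective in $\HSP(\alg)$.

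\emph{Main obstacle.} I expect the delicate point to be the second case of the sufficiency argument: getting the height bookkeeping right ($m\leq\h_{\Calg}(c)\leq h$, and the separate behaviour of cyclic versus non-cyclic $b$) and, above all, combining the fact that homomorphisms do not increase height with the ``$\h(a)\leq h-k$ iff $a\in f^{k}(A)$'' criterion extracted from the proof of Theorem~\ref{thm:monounary polhom} to ensure that the needed preimage $a_{0}$ always exists. The first case is where the fixed point is indispensable, and on the necessity side the one slightly subtle point is the verification that $\alg\sqcup\mathbf{1}\in\HSP(\alg)$ once $\alg$ is fixed-point-free.
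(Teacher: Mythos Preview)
The paper does not actually prove Theorem~\ref{thm:monounary injective}; it merely cites the result from \cite{Jeges2000} (a Hungarian master's thesis), so there is no argument in the paper to compare your attempt against. That said, your proof is correct and self-contained.

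A couple of points worth making explicit, since you gloss over them with ``one checks'': in Case~2 the elements $c,f(c),\dots,f^{m-1}(c)$ are pairwise distinct and non-cyclic because any coincidence $f^{i}(c)=f^{j}(c)$ with $i<j\le m-1$ (or any cyclicity of $f^{i}(c)$) would force $f^{i}(c)$ to reappear as some $f^{k}(c)$ with $k\ge m$, hence to lie in the subalgebra $\Balg^{*}$, contradicting the minimality of $m$. This is also what yields $m\le\h_{\Calg}(c)$. The claim that homomorphisms cannot increase height is likewise easy but deserves a line: if $f^{r}(b)$ is cyclic in $\Calg$ then applying $\varphi^{*}$ shows $f^{r}(\varphi^{*}(b))$ is cyclic in $\alg$. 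With these filled in, your extension argument goes through, and the appeal to the equivalence ``$\h(a)\le h-k \iff a\in f^{k}(A)$'' (established inside the proof of Theorem~\ref{thm:monounary polhom}) is legitimate. On the necessity side, your use of $\alg\sqcup\mathbf{1}$ is clean: the only possible two-variable identity in a monounary signature is $f^{i}(x)=f^{j}(y)$, and you correctly observe that this forces $f^{i}$ to be constant with value a fixed point, so a fixed-point-free $\alg$ has a purely one-variable equational theory, which $\alg\sqcup\mathbf{1}$ shares.
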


Let us note that comparing theorems~\ref{thm:monounary polhom}, \ref{thm:monounary Cpotty polhom} and \ref{thm:monounary injective}, one can construct examples illustrating each one of the ``non-implications'' of Figure~\ref{fig:conditions}.  

\acknowledgements\label{sec:ack}

The authors are grateful to Dragan Ma\v{s}ulovi\'{c} and Christian Pech for helpful discussions.

%\bibliographystyle{amsplainmodositva}  
%\bibliography{polhom-references}

\end{document}